\documentclass{article}
\usepackage{graphicx}
\usepackage{mathptmx}
\usepackage{amsmath}
\usepackage{amssymb}
\usepackage{amsthm}
\usepackage{pst-node}
\usepackage{tikz-cd} 
\usepackage{bm}
\usepackage[sorting=none]{biblatex}
\usepackage[a4paper, margin=2.5cm]{geometry}
\usepackage{authblk}

    \theoremstyle{plain}
    \newtheorem{theorem}{Theorem}
    \newtheorem{lemma}[theorem]{Lemma}
    
    \newtheorem{corollary}[theorem]{Corollary}
    
    \theoremstyle{remark}

    \newtheorem{definition}{Definition}

\newcommand{\Hml}{H}
\newcommand{\Hpq}{\Hml_{pq}}
\newcommand{\Hqp}{\Hml_{qp}}
\newcommand{\Hpp}{\Hml_{pp}}
\newcommand{\Hqq}{\Hml_{qq}}
\newcommand{\Hp}{\Hml_{p}}
\newcommand{\Hq}{\Hml_{q}}
\newcommand{\Oh}{\mathcal{O}}
\newcommand{\R}{\mathbb{R}}
\newcommand{\pp}{\tilde{p}_p}
\newcommand{\pq}{\tilde{p}_q}
\newcommand{\qp}{\tilde{q}_p}
\newcommand{\qq}{\tilde{q}_q}
\newcommand{\K}{\hat{H}}

\begin{document}


\title{Symplectic Error of Implicit Symplectic Integrators:\\ A Qualitative Structural Analysis}

\author{Mat\v{e}j Gajdo\v{s}}
\author{Ond\v{r}ej Brichta}
\author{V\'{a}clav Ku\v{c}era\thanks{Email: kucera@karlin.mff.cuni.cz}}
\renewcommand\Affilfont{\itshape\small}
\affil{
Charles University, Faculty of Mathematics and Physics, \protect\\ Sokolovská 83, 186 75 Praha 8, Czech Republic
}

\maketitle

\begin{abstract}
We study how inexact nonlinear solvers lead to a loss of exact symplecticity in the Symplectic Euler (SE) and Störmer–Verlet (SV) schemes when applied to general nonseparable Hamiltonian systems. These schemes are implicit and require nonlinear solvers in practice. Here, we consider a fixed number $M$ of fixed‑point iterations (FPI). While SE is exactly symplectic under exact solves, a finite $M$ gives only pseudo-symplecticity. Compared to previous results, we provide a more qualitative, block‑wise characterization of the induced pseudo‑symplecticity by analysing the resulting perturbations to the matrix of symplectic structure $J$. We prove that the perturbed matrix $\tilde{J}$ is skew‑symmetric, that one diagonal block vanishes identically (depending on the SE variant), and that the remaining blocks are $O(h^{M+1})$ perturbations of their counterparts in $J$, with time step $h$. A quadratic Hamiltonian example shows these bounds are sharp. Extending to compositions, we quantify how SV inherits distinct decay orders across different blocks of the symplectic defect. As a corollary, we show that the perturbation of volume preservation in phase space arises solely from the off‑diagonal blocks of $\tilde{J}$, and we bound the induced energy error along trajectories. Numerical experiments on a tokamak magnetic‑field Hamiltonian, where $q$-implicit SE is fully nonlinear (requiring FPI) but $p$-implicit SE is linearly implicit, confirm the sharpness of the theory and highlight the gap to the exactly symplectic counterpart.


\end{abstract}

\section{Introduction}
Structure‑preserving integrators, such as Symplectic Euler (SE) and St\"{o}rmer–Verlet (SV) schemes, are central to long‑time simulation of Hamiltonian systems because they retain key geometric features of the exact flow, most notably symplecticity and the resulting qualitative stability \cite{hairer-geom_int, sanz-serza-numerical-hamiltonian, hairer-ODE1}. When the Hamiltonian is separable, i.e. $\Hml = T(p) + V(q)$, these schemes are explicit and their exact symplecticity is transparent. In many applications of interest, however, the Hamiltonian is nonseparable and standard symplectic schemes become (nonlinearly) implicit. In practice, one then replaces the exact solution of the stage equations by a nonlinear solver such as fixed point or Newton iteration. While this is routine in implementations, it subtly breaks a key assumption behind the classical symplecticity proof: the stage equations are no longer solved exactly and symplecticity is no longer valid. Understanding precisely how this inexactness degrades structure preservation is the focus of the present work. We note that while this effect has been discussed in existing literature, it is not common knowledge among practitioners.

The effect of inexact solves on symplectic structure is commonly investigated through \emph{pseudo‑symplecticity}: the numerical flow $\Phi_h$ with time step $h$ satisfies
$$(D \Phi_h)^T J (D\Phi_h)= J + \Delta_h,$$ where $J$ is the skew-symmetric matrix of symplectic structure and $\Delta_h= \Oh(h^{r+1})$ measures the symplecticity defect, \cite{Aubry-pseudosymplecticity, Tan-norm_estimate}. Existing analyses typically provide quantitative bounds on $\Delta_h$ for Runge–Kutta schemes under various solver models (fixed‑point, Newton) and initialization strategies \cite{Aubry-pseudosymplecticity, Tan-norm_estimate, Calvo-pseudosymplectic-initializers, Aubry-pseudosymplecticity-FR}. These results illuminate how much symplecticity is lost, but not where the loss actually comes from structurally.

In this paper we revisit the Symplectic Euler and St\"{o}rmer–Verlet schemes and analyse the symplectic error more carefully in a qualitative manner. Specifically, we consider that the resulting nonlinear systems are solved by a fixed number $M$ of fixed point iterations. If we define $\tilde{J} = (D \Phi_h)^T J (D\Phi_h)$ for the resulting method, we show that $\tilde{J}$ is skew-symmetric, one of its diagonal blocks is exactly zero (which one depends on which version of SE we use), and that the remaining blocks are $O(h^{M+1})$ perturbations of the corresponding blocks in $J$. These results are demonstrated to be sharp by providing a specific example of a quadratic Hamiltonian for which the presented estimates are realized. On the example of the St\"{o}rmer-Verlet scheme, we then demonstrate how the result then carries on to methods based on composition of SE schemes, giving precise orders of decay of the individual blocks of the symplectic defect $\tilde{J}-J$.  

Understanding these more qualitative results on the perturbation of symplectic structure allows one to pinpoint the mechanism behind the loss of basic properties of the symplectic schemes, namely the loss of phase space volume preservation and energy preservation. We believe the refined analysis of the SE and SV schemes presented in this paper provides more qualitative insight into the behaviour of these methods rather than the purely quantitative pseudo-symplecticity estimates.

The structure of the paper is as follows. Section \ref{introduction_section} provides the necessary background and introduces the methods. The actual analysis of the structure of the perturbed symplectic matrix $\tilde{J}$ is performed in Section \ref{sec:Symplectic_Error}. An example of a quadratic Hamiltonian demonstrates the sharpness of these results. Consequences of the structural analysis are presented in Section \ref{sec:Consequences}. Namely, we show that the preservation of Lebesgue measure in the $(p,q)$-space is perturbed only due to the off-diagonal blocks of $\tilde{J}$ (i.e. only its skew-symmetric part), and we bound the error in energy preservation along solution trajectories. Finally, in Section \ref{sec:num_exp} we apply the results to the numerical solution of charged particle trajectories in a tokamak magnetic field, \cite{thompson-plasma, jackson-electrodynamics, cambon-tokamak_model}, which leads to a nonseparable Hamiltonian. For the sake of clarity, we move the more technical lemmas and proofs to the end Appendix of the paper.

\section{Symplectic structure and pseu\-do‑sym\-plec\-ti\-ci\-ty} \label{introduction_section}

The (autonomous) Hamiltonian system consists of $2N$ ordinary differential equations (ODEs)
\begin{equation}\label{eq:hamiltonian_eqs}
\begin{split}
z^{\prime}(t) &= J\nabla_z\Hml(z(t)),\quad t\in[0,T],\\
z(t_0)&=z^0\in\mathbb{R}^N\times\mathbb{R}^N,
\end{split}
\end{equation}
where $t_0\in\mathbb{R}$, $T>0$, $z = (q^T,p^T)^T\in\mathbb{R}^N\times\mathbb{R}^N$. The \emph{Hamiltonian} $\Hml=\Hml(z)$ is a mapping defined in a compact and convex set $\mathcal{D}\subset\mathbb{R}^{N}\times\mathbb{R}^N$, and $J\in\mathbb{R}^{2N\times2N}$ is the \emph{matrix of symplectic structure}
\begin{equation*}
    J:=\begin{pmatrix}
        O&I\\
        -I&O
    \end{pmatrix}
\end{equation*}
(with $I$ being the identity matrix). The vectors $q\in\mathbb{R}^N $ and $p\in\mathbb{R}^N$ are called the \emph{position} and \emph{momentum} vectors, respectively. 

For the sake of brevity, we will assume throughout this work that the Hamiltonian is sufficiently smooth, so that all derivatives are continuous on $\mathcal{D}$. 

It is well known that the above assumptions guarantee the well-posedness of the initial value problem \eqref{eq:hamiltonian_eqs}, i.e., for any $z^0 \in \hbox{Int}(\cal D)$, there exists a unique classical solution $z(t) = \varphi_t(z^0)$ for sufficiently small time $t$, which satisfies the initial condition $z(0) = \varphi_0(z^0) = z^0$. Moreover, it can be shown (see \cite{amann_ODE}, Chapter 2) that for a fixed sufficiently small $t$, the mapping $\varphi_t : {\cal D} \rightarrow \mathbb{R}^{2N}$, traditionally called the {\it flow} generated by the system \eqref{eq:hamiltonian_eqs}, is a symplectic mapping.
\begin{definition}
     Let $G\subset\mathbb{R}^N\times\mathbb{R}^N$ be open and $\phi:G\rightarrow\mathbb{R}^N\times\mathbb{R}^N$. We call $\phi$ a \emph{symplectic mapping} (or a \emph{symplectomorphism}), if $\phi$ is a diffeomorphism and for every $y\in G$, it holds that
     \begin{equation}
       (D \phi(y))^T J (D\phi(y)) = J,
       \label{def:symplectic}
     \end{equation}
     where $D\phi(y)$ denotes the Jacobian matrix of the mapping $\phi$ at the point $y$.     
 \end{definition}

This property immediately implies that the determinant of $D\varphi_t$ is equal to 1. Thus, the Lebesgue measure $\lambda^{2N}$ on $\mathbb{R}^{2N}$ is $\varphi_t$-invariant, i.e.:
\begin{equation*}
    \lambda^{2N}(\varphi_t(M))=\lambda^{2N}(M)
\end{equation*}
for any measurable set $M\subset\mathcal{D}$. This identity is often referred to as the \emph{volume preservation property} and plays a key role, for example, in the context of ergodic theory (for details, see \cite{Petersen_1983}).

A more detailed discussion of the mathematical theory of Hamiltonian dynamical systems can be found, e.g., in the monographs \cite{sanz-serza-numerical-hamiltonian} or \cite{Meyer-Hml_dyn_sys}.

Next, we turn our attention to the general form of a numerical integrator. For simplicity we assume that $\varphi_t(\mathcal{D})\subseteq\mathcal{D}$ for all $t\in[0,T]$. Let $\lbrace t_k\rbrace^K_{k=0}$, where $t_0 = 0$ and $t_K = T$, denote an equidistant partition of the interval $[0, T]$ with step size $h$. The one-step numerical integrator is then defined via its \emph{numerical flow} $\Phi_h : z^k \mapsto z^{k+1}$, where $z^k\in\mathcal{D}$ approximates $z(t_k)$. This mapping approximates the action of the flow $\varphi_h : z(t_k) \mapsto z(t_{k+1})$ described above. 

In the following text, we will use the notation $\Phi_h^{\Hml}$ to emphasize that the numerical integrator corresponds to Hamiltonian system \eqref{eq:hamiltonian_eqs} with the Hamiltonian $\Hml$, and assume that all the approximations $z^0,\ldots, z^K$ lie in $\mathcal{D}$.

Because the exact flow $\varphi_t$ of \eqref{eq:hamiltonian_eqs} is symplectic, it is natural to require symplecticity for the discrete flow as well.

\begin{definition}
We say that the numerical integrator is \textit{symplectic}, if for every smooth-enough Hamiltonian $\Hml$, the numerical flow $\Phi_h^{\Hml}$ given by the particular numerical scheme satisfies the condition (\ref{def:symplectic}).
\end{definition}

The Symplectic Euler method (SE), given by the numerical flow
\begin{equation*}
\Phi_h:\begin{pmatrix}
    q^k\\
    p^k
\end{pmatrix}\mapsto\begin{pmatrix}
    q^{k+1}\\
    p^{k+1} 
\end{pmatrix}, 
\end{equation*}
where
\begin{equation}\label{eq:symplectic_euler_method}
    \begin{split}
        p^{k+1} &:= p^k -h\nabla_q\Hml(q^k,p^{k+1}),\\
        q^{k+1} &:= q^k + h\nabla_p\Hml(q^k,p^{k+1}),
    \end{split}
\end{equation}
is the simplest example of a symplectic numerical integrator, which are generally implicit (see \cite{hairer-geom_int}, Chapter 6). This method also represents the simplest example of the so-called partitioned Runge–Kutta method (see \cite{hairer-geom_int}, Chapter 2). 
\begin{definition}
    Let $s\in\mathbb{N}$, $b_i,\widehat{b}_i\in\mathbb{R}$, $i=1,\ldots,s$, and $a_{ij},\widehat{a}_{ij}\in\mathbb{R}$, $i,j=1,\ldots,s$. A \emph{partitioned s-stage Runge-Kutta method} for the solution of (\ref{eq:hamiltonian_eqs}) is given by the numerical flow $\Phi_h:(q^T,p^T)^T\mapsto(\widetilde{q}^T,\widetilde{p}^T)^T$ constructed in the following way:
    \begin{equation}\label{Runge_Kutta_update}
       \begin{split}
           \widetilde{q}=q+h\sum^s_{i=1}b_ik_i,\\
           \widetilde{p}=p+h\sum^s_{i=1}\widehat{b}_il_i,
       \end{split}
    \end{equation}
    where the \emph{stages} $k_i$ and $l_i$ are given as a solution of the following system of (generally nonlinear) equations: 
    \begin{equation}\label{Runge_Kutta_stages}
        \begin{split}
            k_i &= \nabla_p\Hml\left(q+h\sum^s_{j=1}a_{ij}k_j,\,p+h\sum^s_{j=1}\widehat{a}_{ij}l_j\right),\quad i=1,\ldots,s,\\
            l_i &= -\nabla_q\Hml\left(q+h\sum^s_{j=1}a_{ij}k_j,\,p+h\sum^s_{j=1}\widehat{a}_{ij}l_j\right),\quad i=1,\ldots,s.
        \end{split}
    \end{equation}
\end{definition}

The Symplectic Euler method can be obtained from this general formulation by setting
\begin{equation*}
    s = 1,\quad a_{11}=0,\quad b_{1}=1,\quad \widehat{a}_{11}=1,\quad \widehat{b}_1=1.
\end{equation*}

The symplectic Euler method \eqref{eq:symplectic_euler_method} is an implicit scheme leading, in general, to a nonlinear system for $q^{k+1}, p^{k+1}$. We note that in the special case of \emph{separable} Hamiltonians (i.e. of the form $\Hml = T(p) + V(q)$), the SE scheme reduces to a fully explicit scheme. It is however the case of \emph{nonseparable} $\Hml$ which is of interest to us, since the SE scheme is in general implicit. It is therefore clear that some numerical method for solving nonlinear algebraic equations must be applied in general. In this work we focus on the traditionally recommended fixed point iteration (FPI) solver of nonlinear equations. In particular, we are interested exactly how the FPI solver applied to \eqref{eq:symplectic_euler_method} influences the symplectic condition \eqref{def:symplectic}. It will be shown that solving the system \eqref{eq:symplectic_euler_method} yields an explicit \textit{pseudo‑symplectic} scheme.

\subsection{Pseudo‑symplecticity}

The concept of pseudo‑symplecticity is introduced in \cite{Aubry-pseudosymplecticity} and \cite{Aubry-pseudosymplecticity-FR}. It aims to relax the symplectic condition to a broader range of numerical methods, while keeping some of the favorable theoretical properties of symplectic integrators. The definition follows.

\begin{definition}
We say that a numerical scheme is \textit{pseudo‑symplectic} of \textit{pseu\-do‑sym\-plec\-tic order} $r$, if for every smooth-enough Hamiltonian $\Hml$, the numerical flow $\Phi_h^{\Hml}$ given by the particular numerical scheme satisfies
\begin{equation}
(D \Phi_h^{\Hml})^T J (D\Phi_h^{\Hml}) = J + \Oh(h^{r+1}), \quad h \rightarrow 0.
\label{def:pseudo‑symplectic}
\end{equation}
\end{definition}

Therefore, bounding the error of the symplectic structure is equivalent to proving that the scheme being analysed is pseudo‑symplectic. By analysing this symplectic structure error for the FPI-Symplectic Euler method (FPI-SE), we will thus prove its pseudo‑symplecticity, and therefore also all the other useful properties stemming therefrom. 

Apart from the purely quantitative estimate \eqref{def:pseudo‑symplectic}, we explore in more detail the specific structure of the matrix $(D \Phi_h^{\Hml})^T J (D\Phi_h^{\Hml}) $ from \eqref{def:pseudo‑symplectic} in a more qualitative manner.

\section{Symplectic Error due to Implicitness}
\label{sec:Symplectic_Error}

Our aim in this section is to describe how the inexact, numerical solution of the implicit SE scheme  \eqref{eq:symplectic_euler_method} using FPI influences the symplecticity of this method. This analysis then yields additional results for other symplectic schemes, such as the Störmer-Verlet method.


Specifically, we show that applying the FPI method to the symplectic Euler method forces the whole scheme to be pseudo‑symplectic, where its pseudo‑symplectic order grows linearly with the number of fixed point iterations. This in itself corresponds to proving an estimate of the form \eqref{def:pseudo‑symplectic}. However, we will also examine the matrix structure of $(D\Phi_h^{\Hml})^T J (D \Phi_h^{\Hml})$ in more detail, with $\Phi_h^{\Hml}$ being now the flow of the Symplectic Euler scheme together with the FPI method. The resulting structure of this matrix is more complicated than that of $J$, yet it is not simply an arbitrary $\Oh(h^{r+1})$ perturbation of $J$. We also demonstrate the quantitative and qualitative sharpness of the derived estimates on a specially chosen quadratic Hamiltonian. 

\subsection{Related work}
We already introduced the concept of pseudo‑symplecticity, which is the essential property from which the important (near) structure-preserving properties of the FPI-SE method follow. The topic is introduced in \cite{Aubry-pseudosymplecticity, Aubry-pseudosymplecticity-FR}, but also discussed e.g. in \cite{Calvo-pseudosymplectic-initializers}.

However, we are aware of only a few works analysing the error in symplectic structure itself due to the finite iterative solving of implicit schemes. The pseudo\-symplec\-tic\-ity of a general symplectic Runge-Kutta (RK) scheme utilising FPI method is briefly mentioned in both \cite{Aubry-pseudosymplecticity, Aubry-pseudosymplecticity-FR} without going into much detail. The work \cite{Aubry-pseudosymplecticity-FR} views the RK+FPI method as a new explicit RK method with a specific Butcher's table (see also \cite{hairer-ODE1} Section II.11 for this approach), which can then be directly analysed using the pseudo‑symplectic RK order conditions.

A more in-depth analysis of the symplectic structure error can be found in \cite{Tan-norm_estimate}. It estimates the error in the spectral norm for symplectic RK methods utilising the FPI method as
\begin{equation}\label{Odhad_symp_ch_M2}
    \left\lVert (D\Phi_h)^T J (D\Phi_h) -J \right\rVert_2 = \Oh(M \delta^{M+2}), \quad h \rightarrow 0,
\end{equation}
where $M$ is the number of FPI iterations, $\delta = Ch$ and $C>0$ is a constant dependent on the particular RK scheme and Hamiltonian $\Hml$. However, the work \cite{Tan-norm_estimate} again studies only symplectic Runge-Kutta integrators, a class of methods to which the Symplectic Euler scheme does not belong.  

The fact that the symplectic error in (\ref{Odhad_symp_ch_M2}) is of order 
$\Oh(h^{M+2})$ (as opposed to the results of this work and those reported in 
\cite{Aubry-pseudosymplecticity,Aubry-pseudosymplecticity-FR}) stems from the 
analytical strategy adopted in \cite{Tan-norm_estimate}. There, the update of 
the state variables given by (\ref{Runge_Kutta_update}) is treated separately 
from the computation of the stages defined by the system 
(\ref{Runge_Kutta_stages}) and the associated FPI solver. In contrast, the 
analyses in \cite{Aubry-pseudosymplecticity,Aubry-pseudosymplecticity-FR} (as 
well as in this work) consider these two steps in a unified manner.

\subsection{General notation and schemes}
First, we introduce the basic notation and then recall the definitions of the Symplectic Euler method and the Störmer–Verlet scheme. 

Consider an arbitrary smooth function $F = F(q,p)$ defined in $\mathcal{D}$. Then, we denote by $F_q(q,p)$ and $F_p(q,p)$ the gradient of $F$ with respect to $q$ and $p$, respectively. Furthermore, we will frequently utilize the following $N\times N$ matrices:
\begin{equation*}
    \begin{split}
    \Hpp(q,p)&:=\bigg(\frac{\partial^2\Hml}{\partial p_s\partial p_r}(q,p)\bigg)_{r,s}
    \qquad \Hqq(q,p):=\bigg(\frac{\partial^2\Hml}{\partial q_s\partial q_r}(q,p)\bigg)_{r,s}\\
    \quad\Hpq(q,p)&:=\bigg(\frac{\partial^2\Hml}{\partial p_s\partial q_r}(q,p)\bigg)_{r,s}\qquad \Hqp(q,p):=\Hpq^T(q,p),
    \end{split}
\end{equation*}
where $\Hml$ is the Hamiltonian. Note that $\Hpp$ and $\Hqq$ are symmetric matrices in $\mathrm{Int}(\mathcal{D})$. We will often omit the arguments for simplicity.

Now, consider an arbitrary differentiable numerical flow $\Theta_h$:
\begin{equation*}
    \Theta_h:\begin{pmatrix}
        q\\
        p
    \end{pmatrix}\mapsto\begin{pmatrix}
        \tilde{q}\\
        \tilde{p}
    \end{pmatrix}.
\end{equation*}
If we define the following auxiliary operation 
\begin{equation}\label{strange_commutator_def}
    [R,S] := R^T S - S^T R, \qquad R, S \in \R^{n \times n},
\end{equation}
then a straightforward calculation leads to the following identity
\begin{equation}
\tilde{J_{\Theta}} := (D\Theta_h)^T J (D\Theta_h) = \begin{pmatrix} [\tilde{q}_q, \tilde{p}_q] & A_{\Theta} \\ -A_{\Theta}^T & [\tilde{q}_p, \tilde{p}_p] \end{pmatrix},
\label{eq:perturbed_matrix}
\end{equation}
where $$ A_{\Theta} : = \qq^T \pp - \pq^T \qp.$$
Notice that the matrix \eqref{eq:perturbed_matrix} is always skew-symmetric.

Let $M\in\mathbb{N}$, then the general step of the {\it Symplectic ($p$-implicit) Euler method} combined with $M$ steps of the FPI solver is given by
\begin{align}
\begin{split}
p_0 &:= p, \\
p_{n+1} &:= p - h \Hq(q,p_n), \quad n = 0, 1, \ldots, M-1,\\
\tilde{p} &:= p_M, \\
\tilde{q} &:= q + h \Hp(q,\tilde{p}),
\end{split}
\label{eq:scheme}
\end{align}
while the general step of the 
{\it adjoint} (or {\it $q$-implicit}) {\it Symplectic Euler scheme} combined with $M$ steps of the FPI solver takes the form
\begin{align}
\begin{split}
q_0 &:= q, \\
q_{n+1} &:= q + h \Hp(q_n,p), \quad n = 0, 1, \ldots, M-1,\\
\tilde{q} &:= q_M, \\
\tilde{p} &:= p - h \Hq(\tilde{q},p).
\end{split}
\label{eq:adjoint_scheme}
\end{align}
The mappings
\begin{equation*}
    \Phi_h^{[M]}:\begin{pmatrix}
        q\\
        p
    \end{pmatrix}\mapsto\begin{pmatrix}
        \tilde{q}\\
        \tilde{p}
    \end{pmatrix}
\end{equation*}
and
\begin{equation*}
    \Psi_h^{[M]}:\begin{pmatrix}
        q\\
        p
    \end{pmatrix}\mapsto\begin{pmatrix}
        \tilde{q}\\
        \tilde{p}
    \end{pmatrix}
\end{equation*}
then denote the corresponding numerical flows of methods (\ref{eq:scheme}) and (\ref{eq:adjoint_scheme}), respectively.

Based on this notation, we are able to present one step of the {\it Störmer–Verlet scheme} with fixed point iteration. This scheme also has two versions:
\begin{equation}
\begin{split}
p_0 &:= p,\\
p_{n+1}&:= p -\frac{h}{2}\Hq(q,p_n), \quad n = 0, 1, \ldots, M_1-1,\\
\bar{p}&:=p_{M_1},\\
q_0&:=q + \frac{h}{2}\Hp(q,\bar{p}),\\
q_{n+1}&:= q + \frac{h}{2}\left(\Hp(q,\bar{p})+\Hp(q_n,\bar{p})\right), \quad n = 0, 1, \ldots, M_2-1,\\
\tilde{q} &:= q_{M_2}\\
\tilde{p}&:=\bar{p}-\frac{h}{2}\Hq(\tilde{q}, \bar{p}),
\end{split}
\label{eq:SV1_scheme}
\end{equation}
and (with reversed role of $q,p$)
\begin{equation}
\begin{split}
q_0 &:= q,\\
q_{n+1}&:= q +\frac{h}{2}\Hp(q_n,p), \quad n = 0, 1, \ldots, M_1-1,\\
\bar{q}&:=q_{M_1},\\
p_0&:=p-\frac{h}{2}\Hq(\bar{q},p),\\
p_{n+1}&:= p - \frac{h}{2}\left(\Hq(\bar{q},p)+\Hq(\bar{q},p_n)\right), \quad n = 0, 1, \ldots, M_2-1,\\
\tilde{p} &:= p_{M_2}\\
\tilde{q}&:=\bar{q}+\frac{h}{2}\Hp(\bar{q},\tilde{p}).
\end{split}
\label{eq:SV2_scheme}
\end{equation}
It is easy to see that the numerical flow of the scheme (\ref{eq:SV1_scheme}) is equal to the composition
\begin{equation}\label{SV_flow1}
    \Upsilon_h^{[M_1,M_2]}:= \Psi_{\frac{h}{2}}^{[M_2]}\circ\Phi_{\frac{h}{2}}^{[M_1]},
\end{equation}
while the numerical flow of the scheme (\ref{eq:SV2_scheme}) is equal to the composition 
\begin{equation}\label{SV_flow2}
    \Lambda_h^{[M_1,M_2]}:=\Phi_{\frac{h}{2}}^{[M_2]}\circ\Psi_{\frac{h}{2}}^{[M_1]}, 
\end{equation}
just as the exact Störmer-Verlet flows are given by compositions of the two exact Symplectic Euler flows (see \cite{hairer-geom_int}, Section II.4, Example 4.5).
\subsection{Error estimates}

Assume that in \eqref{eq:perturbed_matrix}, the flow $\Theta_h$ is given by the numerical flow $\Phi_h^{[M]}$ of the $p$-implicit Symplectic Euler scheme given by \eqref{eq:scheme}, with $M$ being the number of fixed point iterations. Our first two results then estimate how the antidiagonal and diagonal blocks in \eqref{eq:perturbed_matrix} differ from the corresponding blocks in the symplectic matrix $J$. The proofs of the theorems are given in the appendix.

We note that the results give $\Oh$-asymptotic error bounds for $h$ approaching zero while keeping $M$ fixed. The constants of the dominant error terms (which are hidden in the $\Oh$-notation) could be in principle recovered from the proofs of the following two theorems, but the theorems only provide the asymptotic behaviour in their current formulation.

\begin{theorem}
After $M$ steps of the method \eqref{eq:scheme}, the antidiagonal block of the matrix $J$ is perturbed as
$$A_{\Phi} = I + \Oh(h^{M+1}), \quad h \rightarrow 0.$$
\label{thm:antidiagonal_estimate}
\end{theorem}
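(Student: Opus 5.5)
Our plan is to compute the four Jacobian blocks $\qq,\qp,\pq,\pp$ of $\Phi_h^{[M]}$ explicitly through the recursion \eqref{eq:scheme}, and then to compare $A_\Phi=\qq^T\pp-\pq^T\qp$ with the value it would take if the FPI were replaced by the exact solution, where the block equals $I$ by symplecticity of exact SE. Differentiating the FPI recursion, and writing $(p_n)_q$, $(p_n)_p$ for the Jacobians of the iterates, gives
\begin{align*}
(p_{n+1})_p &= I - h\,\Hqp(q,p_n)\,(p_n)_p, & (p_0)_p&=I,\\
(p_{n+1})_q &= -h\,\Hqq(q,p_n) - h\,\Hqp(q,p_n)\,(p_n)_q, & (p_0)_q&=O,
\end{align*}
where $\Hqp$ denotes the matrix $\partial(\Hq)/\partial p$; we will fix the transposition convention consistently with the paper's definition of $\Hpq$. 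The final update gives $\qq = I + h\Hpq(q,\tilde p)^T$-type terms plus $h\,\Hpp(q,\tilde p)\,\pq$, and $\qp = h\,\Hpp(q,\tilde p)\,\pp$.

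We then form $A_\Phi$. Writing $B:=\Hp$-Jacobian with respect to $q$ evaluated at $(q,\tilde p)$ and $P:=\Hpp(q,\tilde p)$ (symmetric), we have $\qq = I+hB+hP\pq$ and $\qp = hP\pp$. Hence
\[
A_\Phi = (I+hB^T+h\pq^TP)\pp - h\,\pq^T P\pp = (I+hB^T)\,\pp .
\]
The symmetric term cancels exactly, which is the key simplification; $\pq$ drops out. It therefore remains to show $(I+hB^T)\pp = I+\Oh(h^{M+1})$. The natural comparison is the exact implicit relation: for the exact SE solution $p^*$ one has $(I+h\Hqp(q,p^*))\,p^*_p = I$, and $\Hqp(q,p^*)$ coincides with $B^T$ up to the point of evaluation. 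So we will show that $\pp$ agrees with $(I+hB^T)^{-1}$ up to $\Oh(h^{M+1})$.

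To do this we use two facts. First, the FPI contracts with factor $\Oh(h)$, so $p_n-p^*=\Oh(h^{n+1})$; in particular $\tilde p - p^*=\Oh(h^{M+1})$, and consequently $\Hqp(q,p_n) = C+\Oh(h^{n+1})$ with $C:=\Hqp(q,\tilde p)$. Second, unrolling the linear recursion for $(p_n)_p$ gives
\[
(p_M)_p = \sum_{k=0}^{M}(-h)^k C^k + \Oh(h^{M+1}).
\]
The error bookkeeping works because a perturbation $\Oh(h^{n+1})$ in the coefficient at step $n$ is multiplied by $h$ and then by $M-n-1$ further factors of $h$, so each such contribution is $\Oh(h^{M+1})$. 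Multiplying this truncated Neumann series by $(I+hC)$ telescopes to $I+(-h)^{M+1}C^{M+1}+\Oh(h^{M+1}) = I+\Oh(h^{M+1})$, using $B^T=C$ exactly at $(q,\tilde p)$.

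We expect the main obstacle to be this last unrolling step. It requires careful induction bookkeeping of how the evaluation-point mismatches $p_n$ versus $\tilde p$ propagate through products, with uniform bounds coming from smoothness on the compact set $\mathcal D$. We would first try to state the inductive claim as $(p_n)_p=\sum_{k\le n}(-hC)^k+\Oh(h^{n+1})$ and verify the induction step directly.
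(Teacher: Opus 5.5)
Your proposal is correct and is essentially the paper's proof. The same cancellation of the $\Hpp$ terms gives $A_\Phi=(I+h\Hpq^{[M]})\pp$, and the remaining estimate rests on the same fixed-point rate $p_n-p_M=\Oh(h^{n+1})$. The only difference is bookkeeping: you freeze all Hessians at $\tilde p$ and telescope an exact truncated Neumann series (your induction $(p_n)_p=\sum_{k\le n}(-hC)^k+\Oh(h^{n+1})$ closes directly), whereas the paper telescopes the exact product expansion of $\pp$ from Lemma~\ref{lemma:diff_momenta_wrt_coordin} and bounds consecutive product differences with Lemma~\ref{lemma:diff_hamilton_prods}.

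Two harmless points. The leftover term is $(-1)^M h^{M+1}C^{M+1}$ rather than $(-h)^{M+1}C^{M+1}$. You also do not need the exact solution $p^*$, since $p_{n+1}-p_n=\Oh(h)\,(p_n-p_{n-1})$ already yields $p_n-p_M=\Oh(h^{n+1})$.
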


Remarkably, one of the diagonal blocks in \eqref{eq:perturbed_matrix} for the flow $\Phi_h^{[M]}$ stays zero without any additional error induced by the FPI solver. 

\begin{theorem}\label{thm:diag_estimate}
Consider $M$ steps of the FPI solver in \eqref{eq:scheme}, then for the diagonal blocks of the matrix $\tilde{J}$ the following holds:
\begin{equation}\label{zero_diag_block}
  [\tilde{q}_p, \tilde{p}_p] = O   
\end{equation}
and 
\begin{equation}\label{nonzero_diag_block}
  [\tilde{q}_q, \tilde{p}_q] = \Oh(h^{M+1}), \quad h \rightarrow 0.   
\end{equation}
\end{theorem}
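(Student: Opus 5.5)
The plan is a direct computation of both diagonal blocks from the last line of \eqref{eq:scheme}, $\tilde q = q + h\Hp(q,\tilde p)$, reducing everything to properties of the single matrix $\pq$. Write $C := \partial_q \Hp(q,\tilde p)$ for the mixed Hessian, so that $hC^T = \partial_p(h\Hq)(q,\tilde p)$; the exact index convention does not matter, only this transpose relation. Note also that $\Hpp$ is symmetric. The chain rule then gives
\[
\qp = h\Hpp(q,\tilde p)\,\pp, \qquad \qq = I + hC + h\Hpp(q,\tilde p)\,\pq .
\]

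\textbf{Zero block \eqref{zero_diag_block}.} Substitute into \eqref{strange_commutator_def}:
\[
[\qp,\pp] = h\,\pp^T(\Hpp^T - \Hpp)\,\pp = O.
\]
This holds for any $\pp$, so it is exact and independent of $M$. The same cancellation in the $q$-block removes the $\Hpp$ contribution there:
\[
[\qq,\pq] = [\,I+hC,\ \pq\,] = (I+hC)^T\pq - \pq^T(I+hC).
\]

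\textbf{Nonzero block \eqref{nonzero_diag_block}.} The key step is to show that $\pq$ satisfies the differentiated \emph{exact} SE equation up to a residual $R = \Oh(h^{M+1})$. That is, I aim for
\[
(I + hC^T)\,\pq = -h\Hqq(q,\tilde p) + R .
\]
Given this, set $B := hC^T$, so that $(I+hC)^T = I+B$. Then
\[
[\qq,\pq] = (I+B)\pq - \pq^T(I+B)^T = (-h\Hqq + R) - (-h\Hqq + R)^T = R - R^T,
\]
because $\Hqq$ is symmetric. This is $\Oh(h^{M+1})$, as required.

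\textbf{Main obstacle: the residual estimate.} Differentiating the last FPI step gives
\[
p_{M,q} = -h\bigl(\Hqq(q,p_{M-1}) + C^T(q,p_{M-1})\,p_{M-1,q}\bigr).
\]
Here $C^T(q,\cdot)$ means the mixed Hessian $\partial_p\Hq$ evaluated at $(q,\cdot)$; at $(q,\tilde p)=(q,p_M)$ this is exactly $B/h$. So $R$ is $-h$ times the difference between the bracket above and the same bracket with $p_{M-1}$ replaced by $p_M$ everywhere. I would control this difference by induction on $n$, proving the two increment bounds
\[
p_n - p_{n-1} = \Oh(h^n), \qquad p_{n,q} - p_{n-1,q} = \Oh(h^n).
\]
Each holds for $n=1$, since $p_1-p_0 = -h\Hq = \Oh(h)$ and $p_{1,q} = -h\Hqq = \Oh(h)$. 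The induction step follows from Lipschitz continuity of $\Hq$, $\Hqq$ and the mixed Hessian on the compact convex $\mathcal{D}$: every increment gains one factor of $h$. Along the way I also need $p_{n,q} = \Oh(h)$ uniformly in $n$.

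With these bounds, the replacement $p_{M-1}\to p_M$ costs $h\cdot\Oh(h^M)$ in the $\Hqq$ term. In the mixed term it costs $h\bigl(\Oh(h^M)\cdot\Oh(h) + \Oh(1)\cdot\Oh(h^M)\bigr)$. Altogether $R = \Oh(h^{M+1})$.

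The only delicate points are bookkeeping ones. First, the transpose convention for the mixed Hessians must be fixed so that $I+B$ really equals $(I+hC)^T$, which is what makes the cancellation work. Second, the iterates must stay in $\mathcal{D}$, which I would assume as the paper does.
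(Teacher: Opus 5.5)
Your proof is correct, but it handles the $q$-block differently from the paper; the zero block is treated exactly as the paper does.

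\textbf{The paper's route.} It unrolls the iteration into the closed-form sum for $\pq$ from Lemma~\ref{lemma:diff_momenta_wrt_coordin} and multiplies by $I+h\Hpq^{[M]}$. It then regroups $\Gamma$ into the form \eqref{equation_for_Gamma}: a symmetric term $-h\Hqq^{[M-1]}$, one explicit $h^{M+1}$ term, and the telescoping differences $\Delta_n^{[M]}$. Controlling these requires Lemmas~\ref{lemma:diff_hamilton_prods} and~\ref{lemma:Delta_est} on differences of products of Hessians.

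\textbf{Your route.} You work with a single step instead. You show that $\pq$ satisfies the differentiated exact stage equation $(I+h\Hpq^{[M]})\pq=-h\Hqq^{[M]}$ up to a residual $R$, so the block equals exactly $R-R^T$. You then bound $R$ using only two increment estimates, $p_n-p_{n-1}=\Oh(h^n)$ (Lemma~\ref{lemma:diff_momenta_hamilton}) and $\partial_q p_n-\partial_q p_{n-1}=\Oh(h^n)$, both proved by the same elementary induction. I checked the bookkeeping: $C^T=\partial_p\Hml_q(q,\tilde p)$ is the paper's $\Hpq^{[M]}$, the $\Hml_{pp}$ contributions cancel, and the replacement $p_{M-1}\to p_M$ costs $h\cdot\Oh(h^M)$.

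\textbf{Comparison.} Your argument is shorter and more conceptual. It is the classical exact-symplecticity proof for SE, with the whole defect isolated as the skew part of the residual of the linearised stage equation. It would therefore transfer directly to other inner solvers once their residual is bounded. The paper's expansion buys an explicit formula for $\Gamma$, which it reuses for the sharpness example. Your residual is just as explicit there, however. For the Hamiltonian \eqref{opt_model_hamiltnian} one gets $\partial_q p_n=-h\sum_{k=0}^{n-1}(-h\Xi)^k$, hence $(I+h\Xi)\pq=-hI+(-1)^Mh^{M+1}\Xi^M$. Thus $R=(-1)^Mh^{M+1}\Xi^M$, which reproduces the diagonal-block formula of Theorem~\ref{lemma:diagonal_block_optimality} immediately.
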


From the partial results of Theorems \ref{thm:antidiagonal_estimate} and \ref{thm:diag_estimate}, we immediately get the following description of the symplectic structure error of the $p$-implicit Symplectic Euler scheme \eqref{eq:scheme}.

\begin{corollary}\label{cor:form_SE_matrix}
After $M$ FPI steps of the $p$-implicit Symplectic Euler method \eqref{eq:scheme}, the resulting numerical flow $\Phi_h^{[M]}$ satisfies
$$(D \Phi_h^{[M]})^T J (D\Phi_h^{[M]}) = \begin{pmatrix}D & I+E \\ -(I+E)^T & O\end{pmatrix} = J + \begin{pmatrix}D & E \\ -E^T & O\end{pmatrix},$$
where both matrices $D$ and $E$ are of order $\Oh(h^{M+1})$, $h \rightarrow 0$.
\end{corollary}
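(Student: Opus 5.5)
The corollary follows by assembling the block identity \eqref{eq:perturbed_matrix} with Theorems \ref{thm:antidiagonal_estimate} and \ref{thm:diag_estimate}. No new analysis is needed, and the plan is purely bookkeeping.

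First, take $\Theta_h=\Phi_h^{[M]}$ in \eqref{eq:perturbed_matrix}. This gives
\[
(D\Phi_h^{[M]})^T J (D\Phi_h^{[M]})=\begin{pmatrix} [\qq,\pq] & A_\Phi\\ -A_\Phi^T & [\qp,\pp]\end{pmatrix}.
\]
This identity is a direct expansion of the block product. The $(2,1)$ block equals $-A_\Phi^T$ because the whole matrix is a congruence transform of the skew-symmetric $J$, and hence is itself skew-symmetric.

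Second, set $D:=[\qq,\pq]$ and $E:=A_\Phi-I$.
\begin{itemize}
\item Theorem \ref{thm:diag_estimate}, relation \eqref{nonzero_diag_block}, gives $D=\Oh(h^{M+1})$.
\item Relation \eqref{zero_diag_block} makes the lower-right block exactly $O$.
\item Theorem \ref{thm:antidiagonal_estimate} gives $E=\Oh(h^{M+1})$.
\end{itemize}
Substituting these yields the first displayed form, with off-diagonal blocks $I+E$ and $-(I+E)^T$.

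Third, subtract $J$ blockwise. The differences are $D-O$, $(I+E)-I$, $-(I+E)^T+I=-E^T$ and $O-O$, which gives the second form.

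The only point needing care is that the $\Oh$ bounds are uniform, as in the theorems: $M$ is fixed, $h\to0$, and $(q,p)$ ranges over the compact set $\mathcal D$. Both theorems are taken as established, so there is no real obstacle here; the substantive work is in their proofs in the appendix.
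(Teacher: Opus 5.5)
Your proposal is correct and follows the paper's route exactly. The paper states the corollary as an immediate consequence of substituting Theorems \ref{thm:antidiagonal_estimate} and \ref{thm:diag_estimate} into the block identity \eqref{eq:perturbed_matrix} with $\Theta_h=\Phi_h^{[M]}$, which amounts to your choice $D=[\tilde q_q,\tilde p_q]$, $E=A_\Phi-I$ followed by subtracting $J$ blockwise; your check that the $(2,1)$ block is $-A_\Phi^T$ and your remark on uniformity of the $\Oh$-constants over the compact set $\mathcal D$ are slightly more explicit than the paper's one-line justification.
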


In order to get similar results for the $q$-implicit Symplectic Euler scheme \eqref{eq:adjoint_scheme}, we will use a particular symplectic mapping $\zeta$ which swaps the role of position and momenta coordinates. Using the $q$-implicit scheme will then correspond to using the $p$-implicit scheme in the new coordinates, giving us a link between the symplectic errors of both schemes. The next theorem shows that this approach works even when the FPI solver is incorporated (i.e. even when the numerical flows themselves are potentially not symplectic).

\begin{theorem}\label{thm:canonical_transformation}
Consider $M$ steps of the adjoint $q$-implicit scheme \eqref{eq:adjoint_scheme} with Hamiltonian $\Hml$ and denote by $\Psi_h^{[M]}$ its numerical flow. Let 
$$\zeta: (q,p) \mapsto (Q, P) := (-p, q)$$ 
and $\K := \Hml \circ \zeta^{-1}$. Let $\Phi_h^{[M]}$ be the numerical flow of the $P$-implicit scheme \eqref{eq:scheme} with Hamiltonian $\K$. Then the following diagram commutes
\[ \begin{tikzcd}
(q,p) \arrow{r}{\Psi^{[M]}_h} \arrow[swap]{d}{\zeta} & (\tilde{q},\tilde{p}) \arrow{d}{\zeta} \\%
(Q,P) \arrow{r}{\Phi^{[M]}_h}& (\tilde{Q},\tilde{P}).
\end{tikzcd}
\]
\end{theorem}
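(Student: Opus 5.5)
We prove the claim by direct computation: we show that the FPI iterates of the two schemes correspond to each other under $\zeta$ at every stage, by induction on $n$.

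First we express the derivatives of $\K$ in terms of those of $\Hml$. Since $\zeta^{-1}(Q,P) = (P,-Q)$, we have $\K(Q,P) = \Hml(P,-Q)$. The chain rule gives
\begin{equation*}
\K_Q(Q,P) = -\Hp(P,-Q), \qquad \K_P(Q,P) = \Hq(P,-Q).
\end{equation*}
Set $(Q,P) = \zeta(q,p) = (-p,q)$. Then $(P,-Q) = (q,p)$, and more generally, for any point $(q',p')$ with image $(Q',P') = (-p',q')$, we get $\K_Q(Q',P') = -\Hp(q',p')$ and $\K_P(Q',P') = \Hq(q',p')$.

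Next we write scheme \eqref{eq:scheme} for $\K$ in the variables $(Q,P)$:
\begin{equation*}
P_0 = P,\qquad P_{n+1} = P - h\K_Q(Q,P_n),\qquad \tilde P = P_M,\qquad \tilde Q = Q + h\K_P(Q,\tilde P).
\end{equation*}
The inductive claim is $P_n = q_n$ for all $n$, where $q_n$ are the iterates of \eqref{eq:adjoint_scheme}. The base case is $P_0 = P = q = q_0$. For the inductive step, assume $P_n = q_n$. Then $(Q,P_n) = (-p,q_n) = \zeta(q_n,p)$, so
\begin{equation*}
P_{n+1} = q - h\bigl(-\Hp(q_n,p)\bigr) = q + h\Hp(q_n,p) = q_{n+1}.
\end{equation*}
This gives $\tilde P = q_M = \tilde q$. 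For the final update, $(Q,\tilde P) = (-p,\tilde q) = \zeta(\tilde q,p)$, so
\begin{equation*}
\tilde Q = -p + h\Hq(\tilde q,p) = -\bigl(p - h\Hq(\tilde q,p)\bigr) = -\tilde p.
\end{equation*}
Therefore $(\tilde Q,\tilde P) = (-\tilde p,\tilde q) = \zeta(\tilde q,\tilde p)$, i.e. $\Phi_h^{[M]}\circ\zeta = \zeta\circ\Psi_h^{[M]}$, and the diagram commutes.

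The only real obstacle is keeping the signs and argument orderings consistent when applying the chain rule to $\K = \Hml\circ\zeta^{-1}$. For this reason the derivative identities should be verified explicitly before the induction. Two domain points should also be noted. First, $\K$ is defined on $\zeta(\mathcal{D})$. Second, all iterates stay in the domains where the schemes are defined; this follows from the correspondence $P_n = q_n$ together with the standing assumption on the iterates. Finally, $\zeta$ is linear with $D\zeta^T J D\zeta = J$, which is what will later let us transfer Corollary~\ref{cor:form_SE_matrix} to $\Psi_h^{[M]}$. The commutation itself, however, needs no symplecticity.
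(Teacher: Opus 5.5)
Your proposal is correct and follows essentially the same route as the paper. Both arguments compute $\K_Q = -\Hp\circ\zeta^{-1}$ and $\K_P = \Hq\circ\zeta^{-1}$ by the chain rule, show $P_n = q_n$ by induction over the fixed-point iterates, and conclude $(\tilde Q,\tilde P) = (-\tilde p,\tilde q) = \zeta(\tilde q,\tilde p)$.
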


Using the previous theorem, we obtain an analogous result to Corollary \ref{cor:form_SE_matrix} for the $q$-implicit Symplectic Euler scheme \eqref{eq:adjoint_scheme}.

\begin{corollary}\label{cor:form_adjoint}
After $M$ FPI steps of the adjoint ($q$-implicit) Symplectic Euler method \eqref{eq:adjoint_scheme}, the resulting numerical flow $\Psi_h^{[M]}$ satisfies
$$\big(D\Psi_h^{[M]}\big)^T J \big(D\Psi_h^{[M]}\big) = \begin{pmatrix}O & I + \hat{E}\\ -(I+\hat{E})^T & \hat{D} \end{pmatrix} = J + \begin{pmatrix}O & \hat E \\ - \hat{E}^T & \hat{D}\end{pmatrix},$$
where both matrices $\hat{D}$ and $\hat{E}$ are of order $\Oh(h^{M+1})$, $h \rightarrow 0$.
\end{corollary}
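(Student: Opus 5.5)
We derive the statement from Theorem \ref{thm:canonical_transformation} and Corollary \ref{cor:form_SE_matrix}. The commuting diagram gives $\Psi_h^{[M]} = \zeta^{-1}\circ\Phi_h^{[M]}\circ\zeta$, where $\Phi_h^{[M]}$ is the $P$-implicit scheme for $\K=\Hml\circ\zeta^{-1}$. The map $\zeta$ is linear, so its Jacobian is the constant matrix
\begin{equation*}
Z := D\zeta = \begin{pmatrix} O & -I\\ I & O\end{pmatrix},
\end{equation*}
and the chain rule gives $D\Psi_h^{[M]}(y) = Z^{-1}\,D\Phi_h^{[M]}(\zeta y)\,Z$. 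A direct check shows $Z^TJZ=J$, i.e. $\zeta$ is symplectic. Hence $Z^{-T}JZ^{-1}=J$, and therefore
\begin{equation*}
\big(D\Psi_h^{[M]}\big)^T J\, D\Psi_h^{[M]} = Z^T \big(D\Phi_h^{[M]}\big)^T \big(Z^{-T} J Z^{-1}\big) D\Phi_h^{[M]}\, Z = Z^T\,\tilde J_{\Phi}\,Z,
\end{equation*}
where $\tilde J_\Phi$ is evaluated at $\zeta y$.

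Next we apply Corollary \ref{cor:form_SE_matrix} to $\K$, writing $\tilde J_\Phi = \begin{pmatrix} D & I+E\\ -(I+E)^T & O\end{pmatrix}$. The conjugation by $Z$ is an explicit block computation. First,
\begin{equation*}
\tilde J_\Phi Z = \begin{pmatrix} I+E & -D\\ O & (I+E)^T\end{pmatrix}.
\end{equation*}
Since $Z^T=\begin{pmatrix}O&I\\-I&O\end{pmatrix}$, multiplying on the left gives
\begin{equation*}
Z^T\tilde J_\Phi Z = \begin{pmatrix} O & (I+E)^T\\ -(I+E) & D\end{pmatrix}.
\end{equation*}
Setting $\hat E := E^T$ and $\hat D := D$ yields exactly the claimed form, with the zero block now in the upper-left corner.

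The remaining issue, and the main point to verify, is uniformity of the $\Oh(h^{M+1})$ bounds. Corollary \ref{cor:form_SE_matrix} is applied to $\K$ on the domain $\zeta(\mathcal{D})$, which is again compact and convex because $\zeta$ is linear. Moreover, $\K$ has the same smoothness as $\Hml$, with derivatives bounded by those of $\Hml$. The constants hidden in $\Oh$ in the $p$-implicit result depend only on bounds for derivatives of the Hamiltonian on the domain, so they carry over. Evaluating at $\zeta y$ instead of $y$ therefore does not affect the estimate, and $\hat D,\hat E=\Oh(h^{M+1})$ as $h\to 0$.
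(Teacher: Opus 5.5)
Your proof is correct and takes the same route as the paper: you reduce to the $P$-implicit scheme for $\K$ via Theorem~\ref{thm:canonical_transformation} and the chain rule, then transfer the block structure from Corollary~\ref{cor:form_SE_matrix}, which amounts to $\hat D = D$ and $\hat E = E^T$. The differences are cosmetic. You use $Z^TJZ=J$ to obtain $\tilde J_\Psi = Z^T\tilde J_\Phi Z$ in one step, whereas the paper matches the $[\cdot,\cdot]$ blocks and $A$ one by one via $D\Phi_h^{[M]} = J\,D\Psi_h^{[M]}J^{-1}$; your remark on the uniformity of the hidden constants is something the paper leaves implicit.
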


Corollaries \ref{cor:form_SE_matrix} and \ref{cor:form_adjoint} describe the symplectic error of both variants of the Symplectic Euler method. Using those results and the chain rule, similar descriptions can be obtained for more complicated composed schemes of the form
$$\Theta_{\hat{h}_1, h_1, \dots, h_k}^{[\hat{M}_1, M_1, \dots, M_k]} := \Phi_{h_k}^{[M_k]} \circ \Psi_{\hat{h}_k}^{[\hat{M}_k]} \circ \cdots \circ \Phi_{h_1}^{[M_1]} \circ \Psi_{\hat{h}_1}^{[\hat{M}_1]}.$$
This is illustrated by the following result for the Störmer-Verlet schemes \eqref{eq:SV1_scheme} and \eqref{eq:SV2_scheme}.

\begin{corollary}\label{cor:SV_est}
 Consider $M_1$ steps of the FPI solver in (\ref{eq:scheme}) and $M_2$ steps of the FPI solver in (\ref{eq:adjoint_scheme}). For the St\"ormer-Verlet scheme with numerical flow $\Upsilon_h^{[M_1,M_2]}$ defined in (\ref{SV_flow1}), the following identity holds:
 \begin{equation}\label{symp_perturbation_SV1}
    \big(D\Upsilon_h^{[M_1,M_2]}\big)^TJ \big(D\Upsilon_h^{[M_1,M_2]}\big)=J + \begin{pmatrix}P_{11} & P_{12} \\ P_{21}& P_{22} \end{pmatrix},
\end{equation}
where matrices $P_{11}, P_{12}$ and $P_{21}$ are of order $\Oh(h^{\min\lbrace M_1,M_2\rbrace+1})$, $h\rightarrow 0$, and matrix $P_{22}$ is of order $\Oh(h^{M_2+1})$, $h\rightarrow 0$.

An analogous identity holds for the matrix $\big(D\Lambda_h^{[M_1,M_2]}\big)^TJ \big(D\Lambda_h^{[M_1,M_2]}\big)$ associated with the numerical flow $\Lambda_h^{[M_1,M_2]}$ from \eqref{SV_flow2}. In this case,
the matrices $P_{12}, P_{21}$ and $P_{22}$ are of order $\Oh(h^{\min\lbrace M_1,M_2\rbrace+1})$, $h\rightarrow 0$, and matrix $P_{11}$ is of order $\Oh(h^{M_2+1})$, $h\rightarrow 0$.  
\end{corollary}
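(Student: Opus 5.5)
The proof rests on the chain rule and the block structure already established in Corollaries \ref{cor:form_SE_matrix} and \ref{cor:form_adjoint}. For $\Upsilon_h^{[M_1,M_2]} = \Psi_{h/2}^{[M_2]}\circ\Phi_{h/2}^{[M_1]}$ we have $D\Upsilon = D\Psi(\Phi(z))\,D\Phi(z)$. Hence
\[
(D\Upsilon)^T J (D\Upsilon) = (D\Phi)^T\big[(D\Psi)^T J (D\Psi)\big](D\Phi) = (D\Phi)^T (J + B)(D\Phi),
\]
where, by Corollary \ref{cor:form_adjoint} applied with step $h/2$,
\[
B = \begin{pmatrix} O & \hat E\\ -\hat E^T & \hat D\end{pmatrix}, \qquad \hat D,\hat E = \Oh(h^{M_2+1}).
\]
Halving the step does not change the order. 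The evaluation point $\Phi(z)$ lies in $\mathcal{D}$ by the standing assumption, so the hidden constants stay uniform. Applying Corollary \ref{cor:form_SE_matrix} to the first term gives
\[
(D\Phi)^T J (D\Phi) = J + \begin{pmatrix} D & E\\ -E^T & O\end{pmatrix}, \qquad D,E = \Oh(h^{M_1+1}).
\]
It therefore remains to analyse the conjugated perturbation $(D\Phi)^T B (D\Phi)$ block by block.

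The next step is to record the structure of $D\Phi$. Differentiating \eqref{eq:scheme} recursively in $n$ shows that
\[
\tilde p_p = I+\Oh(h),\quad \tilde p_q=\Oh(h),\quad \tilde q_q = I+\Oh(h),\quad \tilde q_p=\Oh(h),
\]
uniformly on the compact set $\mathcal{D}$. This uses only continuity of the second derivatives of $\Hml$ and the fact that $M_1$ is fixed. In particular $D\Phi$ is bounded, so every block of $(D\Phi)^T B (D\Phi)$ is $\Oh(h^{M_2+1})$. Adding the $\Oh(h^{M_1+1})$ blocks $D$, $E$, $-E^T$ yields $P_{11},P_{12},P_{21}=\Oh(h^{\min\{M_1,M_2\}+1})$.

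The delicate point is $P_{22}$, which must not pick up any $\Oh(h^{M_1+1})$ contribution. Since the $(2,2)$ block in Corollary \ref{cor:form_SE_matrix} vanishes exactly, $P_{22}$ equals the $(2,2)$ block of $(D\Phi)^T B (D\Phi)$ alone, namely
\[
\tilde q_p^T \hat E\, \tilde p_p - \tilde p_p^T \hat E^T \tilde q_p + \tilde p_p^T \hat D\, \tilde p_p .
\]
This is $\Oh(h^{M_2+1})$, and the first two terms are even $\Oh(h^{M_2+2})$ because $\tilde q_p = \Oh(h)$. This gives the claim for $\Upsilon$.

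For $\Lambda_h^{[M_1,M_2]} = \Phi_{h/2}^{[M_2]}\circ\Psi_{h/2}^{[M_1]}$ I would run the same argument with the roles swapped. The inner map $\Psi$ with $M_1$ iterations contributes a perturbation with an exactly zero $(1,1)$ block and other blocks of order $h^{M_1+1}$. The outer map $\Phi$ with $M_2$ iterations contributes $B' = \begin{pmatrix} D & E\\ -E^T & O\end{pmatrix}$ of order $h^{M_2+1}$, conjugated by $D\Psi$. The $(1,1)$ block is then only the conjugated $B'$ term, so it is $\Oh(h^{M_2+1})$; the remaining blocks are $\Oh(h^{\min\{M_1,M_2\}+1})$.

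Alternatively, this case could be transported from the first one via Theorem \ref{thm:canonical_transformation}, since $\zeta$ is linear and symplectic, but the direct computation is just as short. The only real care needed overall is the uniform boundedness of $D\Phi$ and $D\Psi$ near the identity, and the exact vanishing of the appropriate diagonal block, which is what keeps $P_{22}$ (respectively $P_{11}$) free of $M_1$-dependence.
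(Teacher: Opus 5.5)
Your proposal is correct and follows the paper's proof: the chain rule splits the defect into the $p$-implicit SE defect from Corollary~\ref{cor:form_SE_matrix} plus the adjoint defect from Corollary~\ref{cor:form_adjoint} conjugated by $D\Phi_{h/2}^{[M_1]}$. You also spell out what the paper dismisses as ``straightforward'', namely the boundedness of $D\Phi_{h/2}^{[M_1]}$ (resp.\ $D\Psi_{h/2}^{[M_1]}$) and the exact vanishing of the $(2,2)$ (resp.\ $(1,1)$) block, which keeps $P_{22}$ (resp.\ $P_{11}$) free of any $M_1$-dependence.
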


\subsection{Optimality of the estimate}
 Let us show that we cannot generally improve the bound given by Theorems \ref{thm:antidiagonal_estimate} and \ref{thm:diag_estimate}. Consider the following quadratic Hamiltonian as a model problem
\begin{equation} \label{opt_model_hamiltnian}
    \Hml(q,p):=\frac{1}{2}\sum^N_{i=1}\left(p^2_i+q^2_i\right)+\sum^N_{i=1}\sum_{j=i+1}^N\left(p_iq_j-2q_ip_j\right).
\end{equation}

Straightforward calculations lead to 
\begin{equation*}
    \frac{\partial\Hml}{\partial q_k}=q_k + \sum^{k-1}_{i=1}p_i-2\sum^{N}_{i=k+1}p_i,\quad \frac{\partial\Hml}{\partial p_k}=p_k + \sum^N_{i=k+1}q_i-2\sum^{k-1}_{i=1}q_i,\quad k = 1,2\ldots N.
\end{equation*}
Thus
\begin{equation*}
    \Hqq=\Hpp=I
\end{equation*}
and
\begin{equation}\label{Hessian_matrix_optim}
    \Hpq = \begin{pmatrix}
0 & -2 & -2 & \cdots & -2 \\
1 & 0 & -2 & \cdots & -2 \\
1 & 1 & 0 & \cdots & -2 \\
\vdots & \vdots & \vdots & \ddots & \vdots \\
1 & 1 & 1 & \cdots & 0
\end{pmatrix}
\end{equation}
We will denote this matrix by $\Xi$. 

Before stating the theorem, let us introduce the following convenient notation for the the skew-symmetric part of an arbitrary square matrix
 \begin{equation}
    [A]_{\mathrm{skew}}:=\frac{1}{2}\left(A-A^T\right),\qquad A\in\mathbb{R}^{n \times n}. \label{eq:skew-part}
\end{equation} 

 \begin{theorem}\label{lemma:diagonal_block_optimality}
  Consider $M$ steps of the FPI solver in the Symplectic (p-implicit) Euler method \eqref{eq:scheme} applied to the Hamiltonian (\ref{opt_model_hamiltnian}). Then
 \begin{equation*}
      [\tilde{q}_q, \tilde{p}_q] =2(-1)^Mh^{M+1}\big[\Xi^M\big]_{\mathrm{skew}}
 \end{equation*}
 and
 \begin{equation*}
     A_{\Phi} = I +(-1)^{M}h^{M+1}\Xi^{M+1}.
 \end{equation*}
For method (\ref{eq:adjoint_scheme}), a similar result is also valid.
 \end{theorem}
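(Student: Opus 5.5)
Because the Hamiltonian \eqref{opt_model_hamiltnian} is quadratic, the FPI iterates in \eqref{eq:scheme} are affine in $(q,p)$. I would therefore compute the Jacobian blocks $\tilde{q}_q,\tilde{q}_p,\tilde{p}_q,\tilde{p}_p$ in closed form and substitute them into the block formulas of \eqref{eq:perturbed_matrix}.

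\textbf{Gradients in matrix form.} From the displayed partial derivatives, and matching indices against \eqref{Hessian_matrix_optim}, we get
$$\Hq(q,p)=q+\Xi p, \qquad \Hp(q,p)=p+\Xi^T q .$$
Set $X:=-h\Xi$ and $S_k:=\sum_{j=0}^{k-1}X^j$. Every matrix that appears below is a polynomial in $\Xi$, possibly transposed, so the untransposed ones all commute with each other.

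\textbf{Jacobian blocks.} Differentiating $p_{n+1}=p-h(q+\Xi p_n)$ gives the recursions
$$\partial_p p_{n+1}=I+X\,\partial_p p_n,\qquad \partial_q p_{n+1}=-hI+X\,\partial_q p_n,$$
with initial values $\partial_p p_0=I$ and $\partial_q p_0=O$. Induction on $n$ then yields
$$\tilde{p}_p=S_{M+1},\qquad \tilde{p}_q=-hS_M .$$
From $\tilde{q}=q+h(\tilde{p}+\Xi^T q)$ we obtain
$$\tilde{q}_q=I+h\Xi^T+h\tilde{p}_q,\qquad \tilde{q}_p=h\tilde{p}_p .$$

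\textbf{Antidiagonal block.} Substituting into $A_\Phi=\tilde{q}_q^T\tilde{p}_p-\tilde{p}_q^T\tilde{q}_p$, the contributions $h\tilde{p}_q^T\tilde{p}_p$ and $-\tilde{p}_q^T(h\tilde{p}_p)$ cancel exactly. This leaves $A_\Phi=(I+h\Xi)S_{M+1}=(I-X)S_{M+1}$. The geometric-sum telescoping identity $(I-X)S_{k}=I-X^{k}$ then gives
$$A_\Phi=I-X^{M+1}=I+(-1)^Mh^{M+1}\Xi^{M+1}.$$

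\textbf{Diagonal block.} In $[\tilde{q}_q,\tilde{p}_q]=\tilde{q}_q^T\tilde{p}_q-\tilde{p}_q^T\tilde{q}_q$ the terms $h\tilde{p}_q^T\tilde{p}_q$ again cancel. What remains is $B-B^T=2[B]_{\mathrm{skew}}$ with
$$B:=(I+h\Xi)\tilde{p}_q=-h(I-X)S_M=-h(I-X^M)=-hI+(-1)^Mh^{M+1}\Xi^M .$$
The $-hI$ term is symmetric and drops out, so $[\tilde{q}_q,\tilde{p}_q]=2(-1)^Mh^{M+1}[\Xi^M]_{\mathrm{skew}}$, as claimed.

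\textbf{Adjoint scheme.} For \eqref{eq:adjoint_scheme} I would invoke Theorem~\ref{thm:canonical_transformation}. Since $\zeta$ is linear and symplectic, $\K=\Hml\circ\zeta^{-1}$ is again quadratic, with mixed Hessian block equal to (up to sign and transposition) $\Xi^T$. The computation above then transfers directly, with the roles of the diagonal blocks swapped as in Corollary~\ref{cor:form_adjoint}. The only care needed is sign bookkeeping under $\zeta$.

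\textbf{Main obstacle.} The algebra is short once $\Hq$ and $\Hp$ are written in matrix form. The main risk is getting the orientation right: one must check from the index conventions that $\Hq=q+\Xi p$ and not $q+\Xi^T p$. If the transpose were placed wrongly, one would obtain $\Xi^T$ instead of $\Xi$, and the skew-part term would flip sign.
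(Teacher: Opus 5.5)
Your proof is correct and is essentially the paper's argument. The paper makes the same reductions $A_\Phi=(I+h\Hpq^{[M]})\tilde p_p$ and $[\tilde q_q,\tilde p_q]=2\big[(I+h\Hpq^{[M]})\tilde p_q\big]_{\mathrm{skew}}$, then specializes its general expansions to the constant Hessians $\Hqq=I$, $\Hpq=\Xi$, where the terms $\Delta_n^{[M]}$ and the product differences vanish; this is exactly your telescoping $(I-X)S_k=I-X^k$. The adjoint case is likewise handled only by appeal to Theorem~\ref{thm:canonical_transformation}.
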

The considered Hamiltonian is not merely an artificial construction. Mixed linear terms of the type appearing in (\ref{opt_model_hamiltnian}) can, for instance, arise from the action of a constant magnetic field, represented by a vector potential of the form $\bm{A}=(-\alpha y,\beta x,0)^T$, on a charged particle (see \cite{goldstein-mechanics}). The particular form of the Hamiltonian associated with this system can be found in Subsection \ref{subsection_char_par}.
  
\section{Consequences}
\label{sec:Consequences}
\subsection{Pseudo‑symplecticity and volume-preservation property}
In Section \ref{introduction_section}, we mentioned the invariance of the Lebesgue measure with respect to the flow generated by the system (\ref{eq:hamiltonian_eqs}) (the volume-preservation property), which is a consequence of the symplecticity of the flow. As we have already proven, the Symplectic Euler method combined with the FPI solver possesses only pseudo‑symplecticity. Therefore, a perturbation of this measure invariance is expected. 

The following theorem provides a quantitative description of the violation of the volume-preservation property. Moreover, the proof of this theorem shows that the behaviour of the evolution of the measure does not depend on the diagonal block $[\tilde{q}_q, \tilde{p}_q]$ in (\ref{eq:perturbed_matrix}) (i.e. it is independent of the error given in Theorem 2).
\begin{theorem}\label{volume_pr_p_psesy}
Let $G\subset \mathcal{D}$ be a measurable set. After $M$ steps of the FPI solver in (\ref{eq:scheme}) and (\ref{eq:adjoint_scheme}), the Lebesgue measure of the set $G$ is changed as 
\begin{equation}\label{volume_perturb_SE_1}
    \lambda^{2N}(\Phi_h^{[M]}(G)) = \lambda^{2N}(G) + \Oh(h^{M+1}),\quad h\rightarrow0
\end{equation}
and
 \begin{equation}\label{volume_perturb_SE_2}
    \lambda^{2N}(\Psi_h^{[M]}(G)) = \lambda^{2N}(G) + \Oh(h^{M+1}),\quad h\rightarrow0.
\end{equation}
Moreover, let $M_1,M_2\in\mathbb{N}$, then 
\begin{equation}\label{volume_perturb_SV1}
    \lambda^{2N}(\Upsilon_h^{[M_1,M_2]}(G)) = \lambda^{2N}(G) + \Oh(h^{\min\lbrace M_1,M_2\rbrace+1}),\quad h\rightarrow0.
\end{equation}
and
\begin{equation}\label{volume_perturb_SV2}
     \lambda^{2N}(\Lambda_h^{[M_1,M_2]}(G)) = \lambda^{2N}(G) + \Oh(h^{\min\lbrace M_1,M_2\rbrace+1}),\quad h\rightarrow0.
\end{equation}
\end{theorem}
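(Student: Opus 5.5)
The plan is to reduce everything to the change-of-variables formula $\lambda^{2N}(\Theta(G)) = \int_G |\det D\Theta(z)|\,dz$. This formula holds for a $C^1$ map $\Theta$ that is injective on $G$. For small $h$, each of $\Phi_h^{[M]}$, $\Psi_h^{[M]}$ and their compositions is a $C^1$ perturbation $\mathrm{id} + \Oh(h)$ on the compact convex set $\mathcal{D}$. Hence it is injective for $h$ small enough: $\|D\Theta - I\| < 1$ on a convex set gives injectivity. It therefore suffices to show $\det D\Theta = 1 + \Oh(h^{r+1})$ uniformly on $\mathcal{D}$, with the appropriate $r$. Integrating over $G\subset\mathcal{D}$, whose measure is bounded, then gives the claim.

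To get the determinant, I take determinants in $(D\Theta)^T J (D\Theta) = \tilde J$. Since $\det J = 1$, this gives $(\det D\Theta)^2 = \det \tilde J$. For $\Phi_h^{[M]}$, Corollary \ref{cor:form_SE_matrix} gives $\tilde J = \begin{pmatrix} D & I+E \\ -(I+E)^T & O\end{pmatrix}$. The lower-right block is zero, so block elimination yields $\det\tilde J = \det(I+E)\det(I+E)^T = \det(I+E)^2$, independently of $D$. This is exactly the remark that the volume change does not see the block $[\tilde{q}_q,\tilde{p}_q]$. Thus $\det D\Phi_h^{[M]} = \pm\det(I+E)$.

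For the sign, both determinants are continuous in $h$ and equal $1$ at $h=0$, so $\det D\Phi_h^{[M]} = \det(I+E) = 1+\Oh(h^{M+1})$. Here I use $\det(I+E) = 1 + \mathrm{tr}\,E + \Oh(\|E\|^2)$ and the fact that $E = \Oh(h^{M+1})$ uniformly on $\mathcal{D}$; this uniformity follows from the continuity of the derivatives of $\Hml$ on the compact $\mathcal{D}$. The case $\Psi_h^{[M]}$ is identical using Corollary \ref{cor:form_adjoint}, where now the upper-left block is zero and $\det\tilde J = \det(I+\hat E)^2$.

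For the Störmer–Verlet flows I would not use Corollary \ref{cor:SV_est} directly, since its matrix has no zero block. Instead I use multiplicativity of the determinant along the composition \eqref{SV_flow1}:
\[
\det D\Upsilon_h^{[M_1,M_2]}(z) = \det D\Psi_{h/2}^{[M_2]}\big(\Phi_{h/2}^{[M_1]}(z)\big)\,\det D\Phi_{h/2}^{[M_1]}(z) = \big(1+\Oh(h^{M_2+1})\big)\big(1+\Oh(h^{M_1+1})\big).
\]
This equals $1 + \Oh(h^{\min\{M_1,M_2\}+1})$, and the same argument applies to $\Lambda_h^{[M_1,M_2]}$.

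The main obstacle is technical rather than algebraic. We need the $\Oh$-bounds from the corollaries to hold uniformly in $z$, and the intermediate point $\Phi_{h/2}^{[M_1]}(z)$ must stay in $\mathcal{D}$, where those bounds are valid. Beyond that, injectivity is needed to justify the change-of-variables formula. I would handle uniformity through the standing smoothness and compactness assumptions. For the intermediate points, I would invoke the paper's assumption that all iterates lie in $\mathcal{D}$, or shrink to a slightly smaller compact set if necessary.
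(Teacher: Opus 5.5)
Your proposal is correct and follows the paper's proof essentially step by step. The shared steps are the identity $(\det D\Theta)^2=\det\tilde J$, and the zero diagonal block from Corollary~\ref{cor:form_SE_matrix} (resp.\ Corollary~\ref{cor:form_adjoint}), which gives $\det\tilde J=(\det(I+E))^2$ as in Lemma~\ref{aux_alg_lemma_block_det}. They continue with $\det(I+E)=1+\mathrm{Tr}\,E+\dots$, the substitution theorem, and multiplicativity of the determinant for the St\"ormer--Verlet compositions. Your remarks on injectivity (needed for the change-of-variables formula) and on uniformity of the $\Oh$-bounds in $z$ supply details the paper leaves implicit. The sign argument is harmless but unnecessary, since only $|\det D\Theta|$ enters.
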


Measure preservation is one of the key assumptions in ergodic theory; therefore, this result should be taken into account when applying ergodic theory to numerical simulations. One possible application is long‑time averaging in the context of molecular dynamics simulations (see e.g. \cite{ergodicity_num_sim} for details).

\subsection{Pseudo‑symplecticity and energy conservation}

For the autonomous problem \eqref{eq:hamiltonian_eqs}, the exact flow $\varphi_h$ preserves the value of $\Hml$ along the solution trajectory, i.e. $\Hml \circ \varphi_h = \Hml$, cf. \cite{Meyer-Hml_dyn_sys, hairer-geom_int}. Symplectic schemes mimic this behaviour by $\Hml \circ \Phi_h \approx \Hml$ with a bounded time-independent error over a long time interval -- this is often referred to as the \textit{energy conservation property}. These error bounds are typically derived using the backward analysis using the formal-series modified equation
$$z' = f_0(z) + hf_1(z)+h^2 f_2(z) + \dots,$$
which can be shown to also be Hamiltonian, i.e. $f_i = J \nabla\Hml_i$ for some functions $\Hml_0, \Hml_1, \dots$, cf. \cite{hairer-geom_int}.

As the use of the $M$-step FPI solver for the Symplectic Euler method corresponds to using a pseudo‑sym\-plec\-tic scheme of pseudo‑symplectic order $M$, the results about pseudo‑sym\-plec\-tic energy conservation can be directly applied to such a scheme. As mentioned in Section 2.3 of \cite{Aubry-pseudosymplecticity}, the standard proof of the modified equation being Hamiltonian can be applied in the order $r$ pseudo‑symplectic case, with the caveat that it holds only for the first $r$ terms of the modified equation, which then has the form
$$z' = J\nabla\big(\Hml_0(z) + h\Hml_1(z) + \dots + h^r \Hml_r(z)\big) + h^{r+1} f_{r+1}(z) + \dots$$
It follows that for the FPI-SE schemes \eqref{eq:scheme} and \eqref{eq:adjoint_scheme} (or pseudo‑symplectic schemes in general), the modified equation cannot be truncated arbitrarily, and thus in general it cannot be truncated optimally, so the strongest results about energy near-conservation in general might not hold, contrary to exact symplectic integrators.

To be more concrete, assume that an integrator $\Phi_h$ of order $p$ has a formal-series modified equation, which is after $K$-order truncation also Hamiltonian of the form
\begin{equation}
z' = J \nabla(\Hml(z) + h^p \Hml_p(z) + \dots + h^K \Hml_K(z)) = J \nabla \Hml^{[K]}(z).
\label{eq:truncated_modified_eq}
\end{equation}

The estimates given in \cite{hairer-geom_int}, Chapter IX, Theorem 8.1 utilise the triangle inequality
$$|\Hml(z_n) - \Hml(z_0)| \leq |\Hml(z_n) - \Hml^{[K]}(z_n)| + |\Hml^{[K]}(z_n) - \Hml^{[K]}(z_0)| + |\Hml^{[K]}(z_0) - \Hml(z_0)|$$
and note that $|\Hml - \Hml^{[K]}| = \Oh(h^p)$, $h \rightarrow 0$ under the assumption of boundedness of $\Hml_p + h\Hml_{p+1} + \dots + h^{K-p} \Hml_K$ (e.g. by restricting to a compact set and continuous $\Hml_i$). This gives
$$|\Hml(z_n) - \Hml(z_0)| = |\Hml^{[K]}(z_n) - \Hml^{[K]}(z_0)| + \Oh(h^p), \quad h \rightarrow 0,$$
so the actual time-dependent drift of the Hamiltonian is encapsulated in the error term $|\Hml^{[K]}(z_n) - \Hml^{[K]}(z_0)|$.

This is the term that is often estimated as ``negligible over exponentially long time intervals" \cite{hairer-geom_int}, at least under the assumption that the whole modified equation is Hamiltonian, so that an optimal truncation index $K$ can be chosen. 

To get a more general estimate without the need of optimal truncation, let $\varphi^{[K]}_t$ be the exact flow of \eqref{eq:truncated_modified_eq}. Under similar assumptions as in the mentioned Theorem 8.1 in \cite{hairer-geom_int}, one can combine the usual bound $\Vert\Phi_h - \varphi_h^{[K]}\Vert = \Oh(h^{K+1})$, followed by the Lipschitz assumption 
$$|\Hml^{[K]} \circ \Phi_h - \Hml^{[K]} \circ \varphi_h^{[K]}| = \Oh(1) \cdot \Vert\Phi_h - \varphi_h^{[K]}\Vert = \Oh(h^{K+1}), \quad h \rightarrow 0,$$
with the preservation of Hamiltonian under its flow $\Hml^{[K]} \circ \varphi^{[K]}_h = \Hml^{[K]}$ to get
\begin{align*}
|\Hml^{[K]}(z_n) - \Hml^{[K]}(z_0)| &\leq \sum_{i=0}^{n-1} |\Hml^{[K]}(z_{i+1}) - \Hml^{[K]}(z_i)| =\\
&=\sum_{i=0}^{n-1} |(\Hml^{[K]} \circ \Phi_h) (z_i) - (\Hml^{[K]} \circ \varphi_h^{[K]})(z_i)| =\\
&=\sum_{i=0}^{n-1} \Oh(h^{K+1}) = \Oh(nh^{K+1}).
\end{align*}
This, together with noting $t=nh$ as the integration time, gives the estimate
\begin{equation}
|\Hml(z_n) - \Hml(z_0)| = \Oh(h^p) + \Oh(th^K), \quad h \rightarrow 0.
\label{eq:energy-drift-estimate}
\end{equation}

As stated before, $M$ steps of the FPI solver used on the Symplectic Euler method results in a pseudo‑symplectic scheme of order $M$, which in particular means that the modified equation of the scheme is Hamiltonian for orders $K \leq M$. The estimate \eqref{eq:energy-drift-estimate} then gives the best asymptotic estimate
$$|\Hml_{\text{SE}}(z_n) - \Hml_{\text{SE}}(z_0)| = \Oh(h) + \Oh(th^M), \quad h \rightarrow 0.$$
Similar bounds can be obtained for schemes composed from several FPI-SE schemes.


\section{Numerical experiments}
\label{sec:num_exp}
In this section, we test all the derived results on the example of a specially chosen nonseparable Hamiltonian taken from plasma physics. This Hamiltonian is specific in that the $q$-implicit SE scheme is nonlinearly implicit, while the $p$-implicit scheme is only linearly implicit, thus bypassing the need for FPI, and can be therefore considered as an effectively explicit method.

\subsection{Charged particle in a tokamak magnetic field}
\label{subsection_char_par}

We consider the movement of a charged particle in a particular toroidal magnetic field. In particular, we will directly show how the actual perturbed matrix $\tilde{J}$ changes with respect to the values of $h$ and $M$. The energy (Hamiltonian) drift due to the finite number of FPI steps will also be empirically obtained.

Several reasons motivate us to study this particular physical problem: First, the problem is somewhat more complicated than the numerical experiments with harmonic oscillators, pendula, and Keplerian motion typically found in the standard literature when testing symplectic methods. Second, the electromagnetic Hamiltonian is prone to be nonseparable (i.e. not of the form $\Hml = T(p) + V(q)$), forcing the $q$-implicit scheme to be fully implicit. On the other hand, the $p$-implicit scheme can be viewed as basically explicit, which lets us compare the \emph{pseudo‑symplectic} behaviour of one Symplectic Euler scheme to the \emph{symplectic} behaviour of the other Symplectic Euler scheme.

To see the implicitness of the schemes, suppose we have a Hamiltonian of the form $\Hml = \frac{1}{2} \Vert \bm{p} - \bm{A} \Vert^2$ with dimensionless momentum $\bm{p}$ and vector potential $\bm{A}$, which is a function of position only. Note that we now boldface the vector quantities for clarity, as is customary in physics.

We have
$$\Hml_q= -(D_q\bm{A})^T \cdot( \bm{p}-\bm{A}), \qquad \Hml_p = \bm{p} - \bm{A}. $$
Thus, we usually cannot solve the implicit $q$-update
$$\bm{\tilde{q}} = \bm{q} + h \big(\bm{p} - \bm{A}(\bm{\tilde q})\big)$$
directly, so a root finding algorithm has to be incorporated. The FPI $q$-implicit scheme reads
\begin{align}
\begin{split}
\bm{q_0} &:= \bm{q}, \\
\bm{q_{n+1}} &:= \bm{q} + h \big(\bm{p} - \bm{A}(\bm{q_n})\big), \\
\bm{\tilde{q}} &:= \bm{q_M}, \\
\bm{\tilde p} &:= \bm{p} + h (D_q \bm{A}(\bm{\tilde q}))^T \cdot \big(\bm{p} - \bm{A}(\bm{\tilde q})\big).
\end{split}
\label{eq:elmag_implicit}
\end{align}

For the $p$-implicit scheme, the implicit update
$$\bm{\tilde p} = \bm{p} + h(D_q \bm{A}(\bm{q}))^T \cdot \big(\bm{\tilde p} - \bm{A}(\bm{q})\big),$$
can be rewritten as
\begin{equation}
\big(I - hD_q \bm{A}(\bm q)\big)^T \bm{\tilde p} = \bm{p} - h(D_q \bm{A}(\bm q))^T \cdot \bm{A}(\bm q),
\label{eq:elmag_explicit}
\end{equation}
which is a linear system that can be solved directly up to rounding errors, i.e. this scheme, together with the update
$$\bm{\tilde q} := \bm{q} + h\big( \bm{\tilde p} - \bm{A}(\bm q)\big),$$
can be viewed as explicit.


For numerical experiments, we use a simplified model of the magnetic field in a tokamak described in \cite{cambon-tokamak_model}. This uses the vector potential
$$\bm{A} = B_0 \frac{F(r)}{\rho} \bm{\hat{\varphi}} - B_0 R \log\left( \frac{\rho}{R} \right) \bm{\hat{z}},$$
where
$$F(r) := \int_0^r f(\lambda) \text{d}\lambda, \qquad f(r):= \frac{r}{Rq(r)}, \qquad q(r) := \frac{1+ar}{1+r^2},$$
cf. \cite{cambon-tokamak_model} -- equations 3, 2, and 14; here $(\bm{\hat{\rho}}, \bm{\hat{\varphi}}, \bm{\hat{z}})$ are the local orthonormal basis vectors of the cylindrical coordinate system (in \cite{cambon-tokamak_model}, $\xi$ is used instead of $\rho$); $r$ is the distance from the main toroidal axis (which is a circle with radius $R$), $a$ is a distance parameter and $B_0$ is a parameter corresponding to the strength of the magnetic field; see Fig. \ref{fig:tokamak_geometry}.

\begin{figure}
\centering
\includegraphics{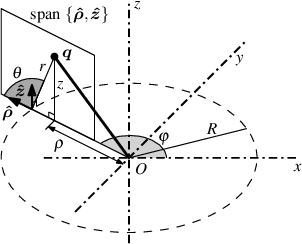}
\caption{Various coordinate systems used to describe the vector potential given in \cite{cambon-tokamak_model}.}
\label{fig:tokamak_geometry}
\end{figure}

The electromagnetic Hamiltonian has the form
$$\Hml = \frac{1}{2m} \Vert \bm{p} - Q \bm{A}\Vert^2 + Q\phi,$$
where $\phi$ is the electric potential \cite{goldstein-mechanics, jackson-electrodynamics}. In this particular case, we have $\phi=0$. From this Hamiltonian, the equations of motion can be directly obtained. For simplicity, we do so in cartesian coordinates, even though cylindrical coordinates might be more suitable in a practical scenario.

In order to be able to use somewhat realistic physical values (with different orders of magnitude), while still not amplifying the rounding errors too much, we employ a nondimensionalisation of the equations. We choose a characteristic length $L_0$ and time $T_0$, and then define
$$P_0 := m\frac{L_0}{T_0}, \qquad A_0 := \frac{P_0}{Q}, \qquad H_0 := \frac{P_0^2}{m}$$
and nondimensional quantities
$$\bm{q'} := \frac{\bm{q}}{L_0}, \quad \bm{p'} := \frac{\bm{p}}{P_0}, \quad \Hml' := \frac{\Hml}{H_0}, \quad \bm{A'}:=\frac{\bm A}{A_0}, \quad t' := \frac{t}{T_0}.$$
A simple calculation then shows that
$$\Hml' = \frac{1}{2} \Vert \bm{p'} - \bm{A'} \Vert^2$$
(note that this is the form which we discussed at the beginning of this section) and that the original Hamilton equations transform to the new Hamiltonian system
$$\frac{\text{d}z'}{\text{d}t'} = J\nabla_{z'} \Hml'(z'),$$
where $z' = (x', y', z', p_x', p_y', p_z')^T = ((\bm{q'})^T, (\bm{p'})^T)^T$. Because the system is again Hamiltonian, we can still employ symplectic integrators, which do not disturb the symplectic structure in the ideal scenario until taking into account the effects of floating-point arithmetic (FPA) and fixed point iteration.

In Fig. \ref{fig:tokamak_trajectory}, the trajectory of a charged particle in such a magnetic field is shown. The numerical integration was performed by the $q$-implicit Symplectic Euler scheme \eqref{eq:elmag_implicit} with 3 FPI iterations in each momentum update, and with parameter values $R = 5$~m, $a = 1$~m, $B_0 = 20$~mT, $m = 1.673 \cdot 10^{-27}$~kg, $Q = 1.602 \cdot 10^{-19}$~C, initial conditions
$$\begin{pmatrix}x_0\\y_0\\z_0\end{pmatrix} = \begin{pmatrix}5.1 \,\text{m}\\0\,\text{m}\\ 0.1\,\text{m}\end{pmatrix}, \qquad
\begin{pmatrix}p_{x0}\\p_{y0}\\p_{z0}\end{pmatrix} = \begin{pmatrix}10^{-23} \,\text{kg.m/s}\\10^{-23}\,\text{kg.m/s}\\ 10^{-21}\,\text{kg.m/s}\end{pmatrix},$$
characteristic dimensions 
$$L_0 = 2\pi R, \qquad T_0 = \frac{m}{QB_0},$$
and the step size $h = 0.1 \cdot T_0$. The choice of $T_0$ corresponds to the characteristic gyrofrequency $\sim 1/T_0$ of the particle \cite{thompson-plasma}.

\begin{figure}
\centering
\includegraphics[width=0.95\textwidth]{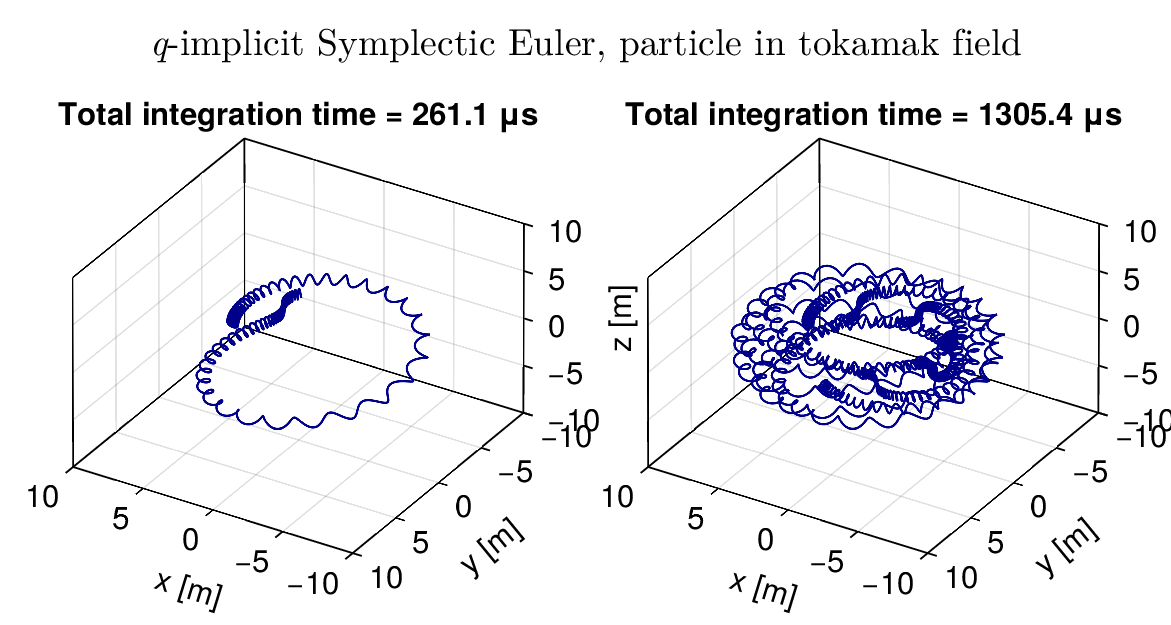}
\caption{Trajectory of a charged particle in a tokamak magnetic field for two different integration times. Integrated by $q$-implicit Symplectic Euler with $M=3$ FPI steps and step size $h=0.1 \cdot T_0$. Visually, this image is indistinguishable from the one obtained by integrating the same system using the $p$-implicit Symplectic Euler scheme \eqref{eq:elmag_explicit}.}
\label{fig:tokamak_trajectory}
\end{figure}

For the simulations, we used the Julia language \cite{Julia}. The code can be found at \url{https://github.com/gajdosmatej/symplectic_error}.

\subsubsection{Discrepancy in the symplectic structure matrix}

The result of a  single application of the $q$-implicit scheme \eqref{eq:elmag_implicit} with numerical flow $\Psi_h^{[M]}$ is analysed in this section. The flow $\Psi_h$ here updates the nondimensional quantities. The Jacobi matrices $D\Psi_h^{[M]}$ were calculated using forward automatic differentiation (symbolic calculation could also be used).

Denote $\delta_h^{[M]} := \big\Vert\, [\tilde q_p, \tilde p_p] \,\big\Vert_F$ and $\alpha_h^{[M]} := \Vert A_\Psi-I\Vert_F$, where $A_\Psi$ is the antidiagonal block from \eqref{eq:perturbed_matrix} (taken for the particular $h$ and $M$) and $\Vert \cdot \Vert_F$ is the Frobenius norm. Figures \ref{fig:discrepancy_diagonal} and \ref{fig:discrepancy_antidiagonal} both show how for a particular fixed $M$, those norms behave as $\approx Ch^{M+1}$ -- squares mark the actual computed values, and the dashed lines are found using the least squares (LS) method on the computed values in the log-log space for the dependency $\log \delta_h^{[M]} \approx \log(Ch^p) = p\log h + \log C$. These best-fit parameters are  presented in Table \ref{table:LS_params}.

\begin{figure}
\centering
\includegraphics[width=0.95\textwidth]{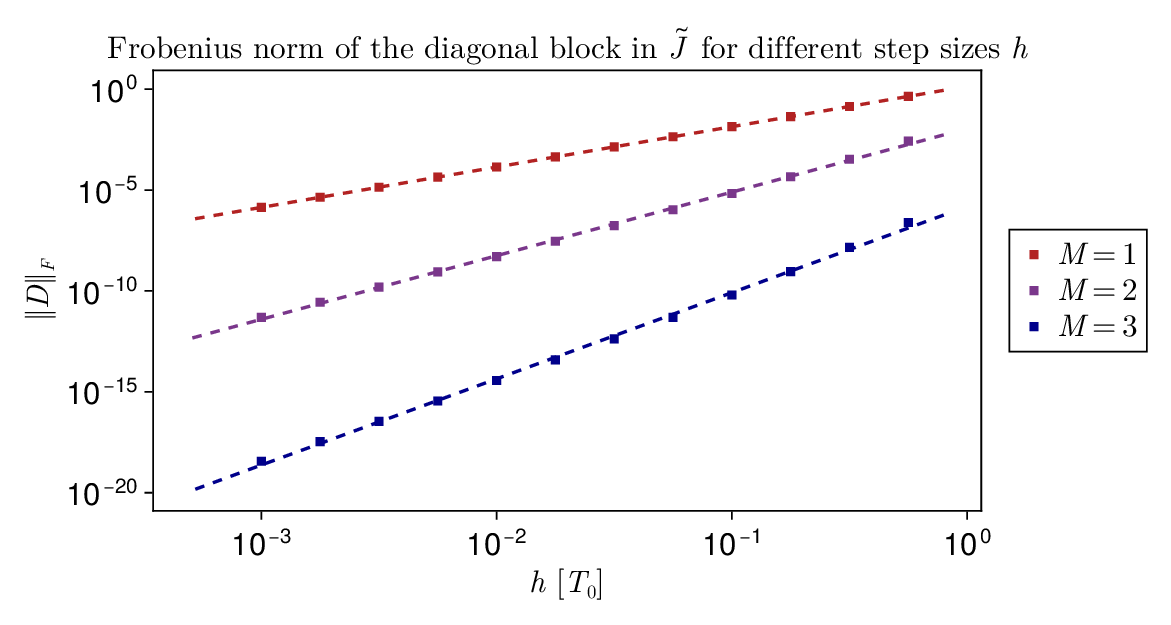}
\caption{The diagonal perturbation of the symplectic structure matrix $J$, measured in the Frobenius norm, evaluated for different step sizes $h$ and FPI steps $M$. One complete step of the scheme \eqref{eq:elmag_implicit} was used to integrate the charged particle problem after nondimensionalisation. On the vertical axis, $\Vert D \Vert_F$ corresponds to $\delta_h^{[M]}$.}
\label{fig:discrepancy_diagonal}
\end{figure}

\begin{figure}
\centering
\includegraphics[width=0.95\textwidth]{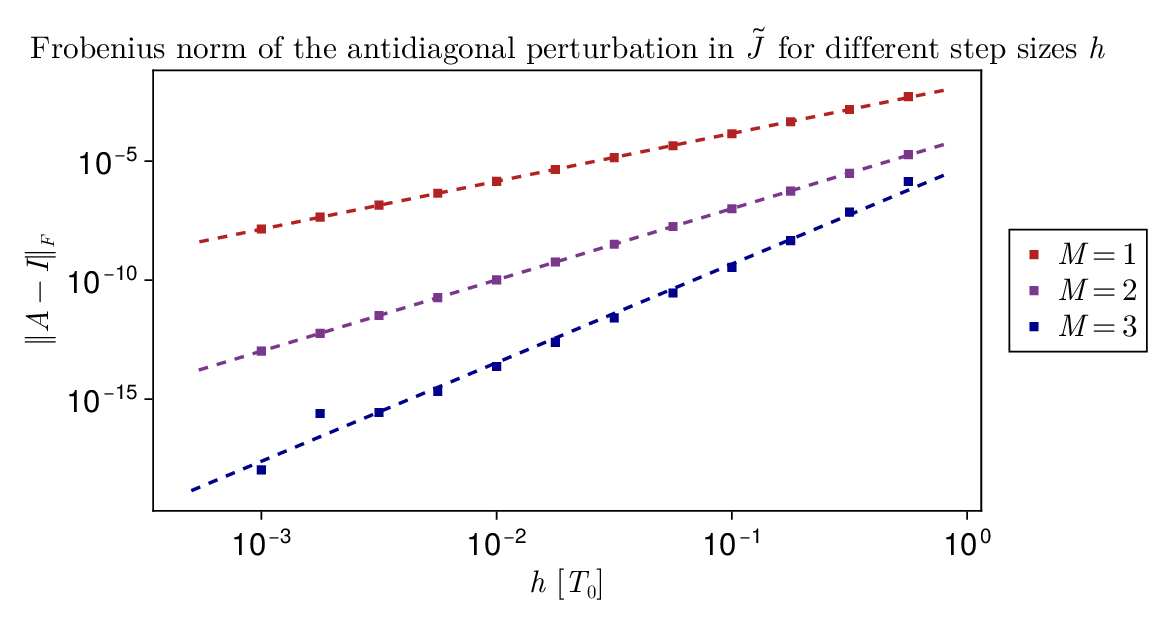}
\caption{The antidiagonal perturbation of the symplectic structure matrix $J$, measured in the Frobenius norm, evaluated for different step sizes $h$ and FPI steps $M$.}
\label{fig:discrepancy_antidiagonal}
\end{figure}

\begin{table}[h]
\centering
 \begin{tabular}{|c|c c|c c|} 
 \hline
 $M$ & $p_\delta$ & $C_\delta$ & $p_\alpha$ & $C_\alpha$ \\ [0.5ex] 
 \hline
 $1$ & $1.99997$ & $1.386$ & $2.01280$ & $1.502 \cdot 10^{-2}$\\
 $2$ & $3.15148$ & $1.120\cdot 10^{-2}$ & $2.99401$ & $9.926 \cdot 10^{-5}$ \\
 $3$ & $4.27162$ & $1.542\cdot 10^{-6}$ & $4.14529$ & $6.586 \cdot 10^{-6}$ \\
 \hline
 \end{tabular}
 \caption{Parameters found using LS linear fit in Figure \ref{fig:discrepancy_diagonal} (second and the third column, diagonal perturbation of the symplectic structure matrix) and in Figure \ref{fig:discrepancy_antidiagonal} (fourth and the fifth column, antidiagonal perturbation of the symplectic structure matrix).}
 \label{table:LS_params}
\end{table}

We also look at a specific instance of the perturbed $\tilde{J}$ matrix for a particular step size $h$ when the implicit step \eqref{eq:elmag_implicit} is employed. For $M=3$ and $h = 0.1 T_0$, we get
$$\begin{pmatrix}
    7.9\cdot 10^{-18} &  3.2\cdot 10^{-22} &  5.3\cdot 10^{-21} & 1.0 & -3.9\cdot 10^{-13} & 1.6\cdot 10^{-10} \\
    3.5\cdot 10^{-22} & 1.1\cdot 10^{-18} & -1.9\cdot 10^{-21} & 2.4\cdot 10^{-14} & 1.0 & 2.7\cdot 10^{-14} \\
    -5.4\cdot 10^{-21} & 2.2\cdot 10^{-21} & 1.9\cdot 10^{-22} & 2.5\cdot 10^{-12} & 4.5\cdot 10^{-13} & 1.0 \\
    -1.0 & -2.4\cdot 10^{-14} & -2.5\cdot 10^{-12} & 3.3\cdot 10^{-18} & -2.1\cdot 10^{-13} & 4.5\cdot 10^{-11} \\
    3.9\cdot 10^{-13} & -1.0 & -4.5\cdot 10^{-13} & 2.1\cdot 10^{-13} & -4.7\cdot 10^{-18} & 1.5\cdot 10^{-13} \\
    -1.6\cdot 10^{-10} & -2.7\cdot 10^{-14} & -1.0 & -4.5\cdot 10^{-11} & -1.5\cdot 10^{-13} & -9.7\cdot 10^{-18}
\end{pmatrix}.$$

Note that the theoretically zero block $[\tilde q_q, \tilde p_q]$ (the upper left 3x3 block) is not precisely zero due to the computer's finite precision, however the discrepancy is extremely small with entries on the order of $10^{-18} - 10^{-22}$. The theoretical skew-symmetry of $\tilde J$ is also preserved within rounding errors.

\subsubsection{Energy drift}

Finally, Fig. \ref{fig:energy_drift} shows the evolution of the Hamiltonian $\Hml$ during the particle motion both for the essentially explicit symplectic scheme \eqref{eq:elmag_explicit} and for the $q$-implicit scheme \eqref{eq:elmag_implicit}, for which the values $M=2$ and $M=3$ are used. The time is measured in multiples of $T_0$ as defined before. The simulation was performed on a long time interval with $3 \cdot 10^6$ time steps of the time integrators.

\begin{figure}[h]
\centering
\includegraphics[width=0.95\textwidth]{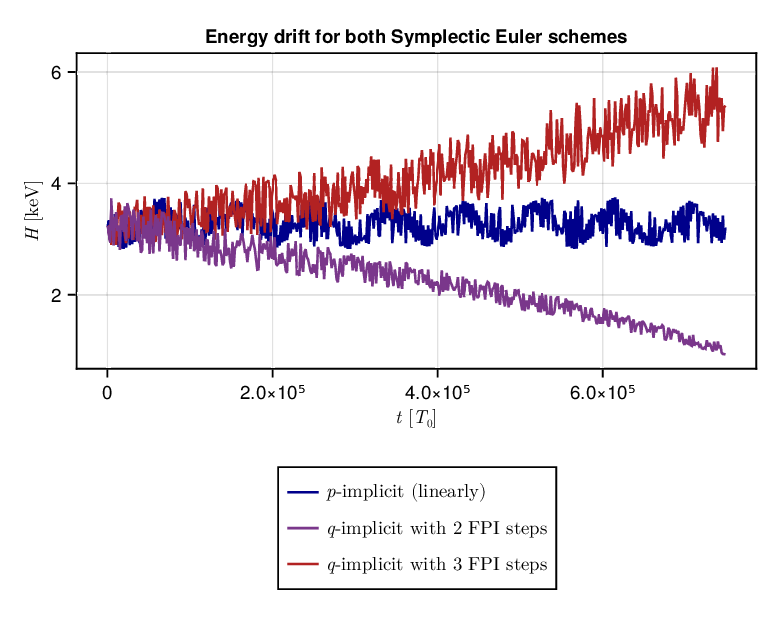}
\caption{Energy (Hamiltonian) drift of both the $q$-implicit (pseudo‑symplectic) and the $p$-implicit (symplectic) schemes over a long time interval ($h=0.25 T_0$, number of steps $3 \cdot 10^6$).}
\label{fig:energy_drift}
\end{figure}

The almost-explicit scheme \eqref{eq:elmag_explicit} exhibit a stable Hamiltonian development with a bounded time-independent error, just as is expected from a symplectic integrator. The implicit schemes \eqref{eq:elmag_implicit} get eventually disturbed during calculation, leading to an increasing error $|\Hml \circ \Psi_{nh}^{[M]} - \Hml_0|$.

\section{Conclusions}
In this work we have studied the loss of symplectic structure in the Symplectic Euler (SE) and St\"{o}rmer-Verlet schemes when applied to general nonseparable Hamiltonian dynamics. For such problems, these methods are implicit, leading to the need to solve the resulting nonlinear algebraic systems numerically. In the presented paper we consider the simplest fixed point iteration (FPI) method, which is the standard basic recommendation throughout the literature. This choice leads to the FPI-SE scheme. While the SE method is exactly symplectic if the implicit problems are solved exactly, the method is no longer symplectic if we apply only a finite number of fixed point iterations. This is a known problem, though it appears to be under-represented in the applied literature. 

Existing literature estimates the so-called pseudo-symplecticity of the resulting schemes, i.e. the magnitude of the perturbation of the skew-symmetric matrix of symplectic structure $J$ in the definition of symplecticity (i.e. the symplecticity defect). Such estimates are however mainly quantitative. In this work we present a more detailed qualitative analysis, showing how the individual blocks of $J$ are perturbed. Namely, if $\tilde{J} = (D \Phi_h)^T J (D\Phi_h)$, where $\Phi_h$ is the numerical flow of the method, we show that $\tilde{J}$ is skew-symmetric, one of its diagonal blocks is exactly zero (which one depends on which version of SE we use), and that the remaining blocks are $O(h^{M+1})$ perturbations of the corresponding blocks in $J$, where $h$ is the time step and $M$ is the number of fixed point iterations. We present an example of a quadratic Hamiltonian for which the presented estimates are optimal, hence cannot be improved in general. On the example of the St\"{o}rmer-Verlet scheme, we demonstrate how the result then carries on to methods based on composition of SE schemes, giving precise orders of decay of the individual blocks of the symplectic defect of $\tilde{J}-J$ (these turn out to be different for different blocks).  

As a consequence of the derived estimates, we show how the volume preservation property, which is characteristic of Hamiltonian flows, is perturbed in the SE and St\"{o}rmer-Verlet schemes. Namely, as a consequence of the structural analysis of $\tilde{J}$, we show that the preservation of Lebesgue measure in the $(p,q)$-space is perturbed only due to the off-diagonal blocks of $\tilde{J}$ (i.e. only its skew-symmetric part). 

Finally, we apply the results about the symplectic defects to bound the error in energy (i.e. value of the Hamiltonian) preservation along solution trajectories. 

The obtained results are tested on a model Hamiltonian which describes the movement of a charged particle in a  magnetic field of a tokamak. The Hamiltonian is nonseparable and special in that the $q$-implicit SE is fully nonlinear (thus requiring FPI), while the $p$-implicit SE scheme is only linearly implicit and can therefore be considered as an effectively explicit scheme in our setting. This allows for comparison between the loss of symplecticity in the FPI-SE scheme, and its  exactly symplectic counterpart. The obtained numerical results confirm all aspects of the presented theory.

Our results give a precise, block-wise structural description of the symplectic defect under inexact solves, pinpointing which components necessarily control measure and energy deviations and establishing optimal rates that are inherited by schemes based on composition. This provides a clean theoretical baseline for what can (and cannot) be improved in an inexact symplectic scheme, independent of algorithmic particulars. It brings to light subtle, often overlooked mechanisms in near‑symplectic behaviour due to the practical implementation of symplectic integrators, clarifying where exactly deviations originate and what they lead to.


\printbibliography
 
\section{Appendix}
\label{sec_Appendix}
\subsection{Appendix A. Proof of Theorem \ref{thm:antidiagonal_estimate} and Theorem~\ref{thm:diag_estimate}}

In this section, denote $\Hml^{[n]} := \Hml(q,p_n)$, where the values $p_n$ are given by the scheme \eqref{eq:scheme}.

\begin{lemma}
For the scheme \eqref{eq:scheme}, it holds that
\begin{equation*}
p_n - p_{n-1} = \Oh(h^n), \quad h \rightarrow 0, \quad \text{and} \quad \Hpq^{[n]} - \Hpq^{[n-1]} = \Oh(h^n), \quad h \rightarrow 0.\end{equation*}
\label{lemma:diff_momenta_hamilton}
\end{lemma}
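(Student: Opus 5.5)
We argue by induction on $n\ge 1$, using only the smoothness of $\Hml$ on the compact convex set $\mathcal{D}$. All constants in the $\Oh$-symbols may depend on $\Hml$ and on $n$ (hence on $M$), but not on $h$. Throughout, we assume that all iterates $p_n$ stay in $\mathcal{D}$, as the paper does. Then every derivative of $\Hml$ up to any fixed order is bounded by a constant independent of $h$.

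\emph{Base case $n=1$.} By \eqref{eq:scheme}, $p_1-p_0 = -h\Hq(q,p)$. Since $\Hq$ is bounded on $\mathcal{D}$, this is $\Oh(h)$.

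\emph{Inductive step.} For $n\ge 2$, subtracting consecutive iterations of \eqref{eq:scheme} gives
\begin{equation*}
p_n - p_{n-1} = -h\big(\Hq(q,p_{n-1}) - \Hq(q,p_{n-2})\big).
\end{equation*}
Because $\mathcal{D}$ is convex and $\Hq$ is $C^1$, the mean value inequality along the segment from $p_{n-2}$ to $p_{n-1}$ gives
\begin{equation*}
\|\Hq(q,p_{n-1}) - \Hq(q,p_{n-2})\| \le \sup_{\mathcal{D}}\|\Hqp\|\,\|p_{n-1}-p_{n-2}\| .
\end{equation*}
The supremum is a constant independent of $h$. By the induction hypothesis, $\|p_{n-1}-p_{n-2}\| = \Oh(h^{n-1})$. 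Hence $p_n-p_{n-1} = h\cdot\Oh(h^{n-1}) = \Oh(h^n)$. Equivalently, one may write the difference as an integral $\int_0^1 \Hqp(q, p_{n-2}+s(p_{n-1}-p_{n-2}))\,\mathrm{d}s\,(p_{n-1}-p_{n-2})$, which yields the same bound.

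\emph{Second claim.} Apply the same mean value argument to the $C^1$ matrix function $p\mapsto \Hpq(q,p)$, whose derivatives are third derivatives of $\Hml$ and hence bounded on $\mathcal{D}$. This gives
\begin{equation*}
\|\Hpq^{[n]}-\Hpq^{[n-1]}\| \le C\,\|p_n-p_{n-1}\| = \Oh(h^n).
\end{equation*}

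The one point requiring care is keeping the constants uniform in $h$, which in turn requires all iterates $p_n$ and the connecting segments to lie in $\mathcal{D}$. Convexity of $\mathcal{D}$ together with the standing assumption that the computed values stay in $\mathcal{D}$ guarantees this. Since $n\le M$ is a fixed finite range, the constants, which grow like $C^n$, remain harmless.
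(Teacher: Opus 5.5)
Your proof is correct and is essentially the paper's argument: induction on $n$ with a first-order mean-value bound for the momentum differences, then the same bound (using bounded third derivatives) for $\Hpq^{[n]}-\Hpq^{[n-1]}$. It is in fact slightly cleaner than the paper's, since you apply the mean-value step directly to $\Hq$, which is what the inductive step actually needs, and you use the integral/inequality form rather than a single intermediate point $\xi$; the only slip is cosmetic, namely that under the paper's index convention the Jacobian of $p\mapsto\Hq(q,p)$ is $\Hpq$ rather than $\Hqp$, so your integral representation should carry $\Hpq$, which does not affect the bound since the two matrices are transposes.
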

\begin{proof}
For the difference of Hamiltonians, note that, using Taylor's polynomial,
$$\Hpq^{[n]} = \Hpq(q,p_n) = \Hpq(q,p_{n-1}) + \Hml_{ppq}(q, \xi)(p_n-p_{n-1}) = \Hpq^{[n-1]} + \Oh(p_n-p_{n-1})$$
for $\xi$ lying on the line segment given by $p_{n-1}$ and $p_n$.

The estimate of the difference of momenta is proved by induction. For $n=1$, direct substitution yields
$$p_1 - p_0 = (p-h\Hq(q,p))-p=-h\Hq(q,p) = \Oh(h).$$
Suppose the estimate holds for $n$. Then, it also holds that 
$$\Hpq^{[n]}-\Hpq^{[n-1]} = \Oh(p_n-p_{n-1}) = \Oh(h^n).$$
We thus have
\begin{align*}
p_{n+1} - p_n &= (p-h\Hq^{[n]}) - (p-h\Hq^{[n-1]}) = -h(\Hq^{[n]} - \Hq^{[n-1]}) = -h \Oh(h^n) =\\
&= \Oh(h^{n+1}), \quad h \rightarrow 0.
\end{align*}
\end{proof}

\begin{lemma}
For any $n \in \{1, \dots, M\}$,
$$\prod_{i=1}^n \Hpq^{[M-i]} - \prod_{i=0}^{n-1} \Hpq^{[M-i]} = \Oh(h^{M-n+1}), \quad h \rightarrow 0.$$
\label{lemma:diff_hamilton_prods}
\end{lemma}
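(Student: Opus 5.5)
The two products differ only at their two ends: the first is $\Hpq^{[M-1]}\cdots\Hpq^{[M-n]}$ and the second is $\Hpq^{[M]}\cdots\Hpq^{[M-n+1]}$. The plan is to telescope the difference, replacing one factor at a time, and bound each single-factor difference with Lemma \ref{lemma:diff_momenta_hamilton}.

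To set this up, write $X_i := \Hpq^{[M-i]}$ and $Y_i := \Hpq^{[M-i+1]}$ for $i=1,\dots,n$. Then the first product is $\prod_{i=1}^n X_i$ and the second, after reindexing $i \mapsto i-1$, is $\prod_{i=1}^n Y_i$, with the same ordering of factors. The standard telescoping identity gives
$$\prod_{i=1}^n X_i - \prod_{i=1}^n Y_i = \sum_{k=1}^n \Big(\prod_{i=1}^{k-1} Y_i\Big)(X_k - Y_k)\Big(\prod_{i=k+1}^n X_i\Big),$$
where all products are ordered by increasing $i$, consistent with the ordering in the statement.

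Lemma \ref{lemma:diff_momenta_hamilton} with index $M-k+1$ gives $X_k - Y_k = \Hpq^{[M-k]} - \Hpq^{[M-k+1]} = \Oh(h^{M-k+1})$. The remaining factors are all $\Oh(1)$: $\Hpq$ is continuous on the compact set $\mathcal{D}$, and we assume throughout that the iterates $(q,p_j)$ remain in $\mathcal{D}$. The $k$-th summand is therefore $\Oh(h^{M-k+1})$. Since $k \le n$, we have $M-k+1 \ge M-n+1$, so for $h \le 1$ every summand is $\Oh(h^{M-n+1})$. The sum has only $n \le M$ terms, with $M$ fixed, so the whole difference is $\Oh(h^{M-n+1})$, as claimed.

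The only real obstacle is the bookkeeping: aligning the two products so that the telescoping pairs $\Hpq^{[M-k]}$ with $\Hpq^{[M-k+1]}$ in the same position, and observing that the worst term is the one with the lowest superscript, $k=n$. Noncommutativity causes no trouble because the telescoping identity keeps the factor order fixed. The lemma also needs uniform boundedness of $\Hpq^{[j]}$ as $h\to0$, which follows from smoothness on $\mathcal{D}$. Alternatively, one can induct on $n$ by peeling off the last factor, but the telescoping sum is more direct.
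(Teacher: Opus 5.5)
Your proof is correct and is essentially the paper's argument. The paper inducts on $n$, splitting off the last factor via $\prod_{i=1}^{n+1} X_i - \prod_{i=1}^{n+1} Y_i = \big(\prod_{i=1}^{n} X_i - \prod_{i=1}^{n} Y_i\big)X_{n+1} + \big(\prod_{i=1}^{n} Y_i\big)(X_{n+1}-Y_{n+1})$; unrolling that induction gives exactly your telescoping sum, with the same dominant term $\Hpq^{[M-n]}-\Hpq^{[M-n+1]} = \Oh(h^{M-n+1})$ coming from Lemma~\ref{lemma:diff_momenta_hamilton}.
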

\begin{proof}
We prove this lemma by induction over $n$ for fixed $M$. For $n=1$, Lemma \ref{lemma:diff_momenta_hamilton} yields
$$\Hpq^{[M-1]} - \Hpq^{[M]} = \Oh(h^{M}) = \Oh(h^{M-1+1}), \quad h \rightarrow 0.$$ 

Suppose that the estimate holds for $n <M$, then
\begin{align*}
\prod_{i=1}^{n+1} \Hpq^{[M-i]} - \prod_{i=0}^{n} \Hpq^{[M-i]} &= \prod_{i=1}^{n+1} \Hpq^{[M-i]} - \Big(\prod_{i=0}^{n-1} \Hpq^{[M-i]}\Big) \Hpq^{[M-n-1]} +\\
&\qquad+ \Big(\prod_{i=0}^{n-1} \Hpq^{[M-i]}\Big) \Hpq^{[M-n-1]} - \prod_{i=0}^{n} \Hpq^{[M-i]} =\\
&= \Big( \prod_{i=1}^{n} \Hpq^{[M-i]} - \prod_{i=0}^{n-1} \Hpq^{[M-i]} \Big) \Hpq^{[M-n-1]} +\\
&\qquad + \Big(\prod_{i=0}^{n-1} \Hpq^{[M-i]}\Big)\big(\Hpq^{[M-n-1]} - \Hpq^{[M-n]}\big) =\\
&= \Oh(h^{M-n+1}) \Hpq^{[M-n-1]} + \Big(\prod_{i=0}^{n-1} \Hpq^{[M-i]}\Big) \Oh(h^{M-n}) =\\
&= \Oh(h^{M-n}) = \Oh(h^{M-(n+1)+1}), \quad h \rightarrow 0,
\end{align*}
again using Lemma \ref{lemma:diff_momenta_hamilton}. 
\end{proof}

\begin{lemma}\label{lemma:diff_momenta_wrt_coordin}
    It holds that
    \begin{equation}\label{diff_momenta_wrt_coord}
        \pq =\sum_{n=1}^M (-h)^n \bigg(\prod_{i=1}^{n-1} \Hpq^{[M-i]}\bigg)\Hqq^{[M-n]}
    \end{equation}
    and
    \begin{equation}
        \pp = \sum_{n=0}^M (-h)^n \prod_{i=1}^n \Hpq^{[M-i]}. \label{eq:diff_momenta_wrt_momenta}
    \end{equation}
\end{lemma}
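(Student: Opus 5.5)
The plan is to prove a slightly more general statement by induction on the iteration index: for every $n\in\{0,\dots,M\}$ the Jacobians of the intermediate iterate $p_n$ from \eqref{eq:scheme} with respect to $q$ and $p$ are
\begin{equation*}
(p_n)_q=\sum_{k=1}^{n}(-h)^k\bigg(\prod_{i=1}^{k-1}\Hpq^{[n-i]}\bigg)\Hqq^{[n-k]},\qquad
(p_n)_p=\sum_{k=0}^{n}(-h)^k\prod_{i=1}^{k}\Hpq^{[n-i]}.
\end{equation*}
Here all products are ordered from left to right in increasing $i$, an empty product is $I$, and an empty sum is $O$. Setting $n=M$ and using $\tilde p=p_M$ then gives \eqref{diff_momenta_wrt_coord} and \eqref{eq:diff_momenta_wrt_momenta} directly.

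First I will fix the Jacobian conventions. With $\Hpq$ defined entrywise as $\partial^2\Hml/\partial p_s\partial q_r$ in position $(r,s)$, the Jacobian of the vector field $q\mapsto\Hq(q,p_n)$ with $p_n$ frozen is $\Hqq^{[n]}$. The Jacobian of $\Hq$ with respect to its second argument is $\Hpq^{[n]}$, since row $r$ corresponds to the component $\partial\Hml/\partial q_r$ and column $s$ to the variable $p_s$. Because $p_n$ itself depends on $(q,p)$, the chain rule applied to the recursion $p_{n+1}=p-h\Hq(q,p_n)$ yields the two linear matrix recursions
\begin{equation*}
(p_{n+1})_q=-h\big(\Hqq^{[n]}+\Hpq^{[n]}(p_n)_q\big),\qquad (p_{n+1})_p=I-h\Hpq^{[n]}(p_n)_p,
\end{equation*}
with initial data $(p_0)_q=O$ and $(p_0)_p=I$, since $p_0=p$.

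Next comes the induction. The base case $n=0$ is immediate: the empty sum gives $O$, and the single $k=0$ term gives $I$. For the step from $n$ to $n+1$, I substitute the induction hypothesis into the recursions. Left-multiplying by $-h\Hpq^{[n]}$ turns each term $(-h)^k\prod_{i=1}^{k}\Hpq^{[n-i]}$ into $(-h)^{k+1}\Hpq^{[n]}\Hpq^{[n-1]}\cdots\Hpq^{[n-k]}$. After reindexing $i\mapsto i+1$ and $k\mapsto k+1$, this is exactly $(-h)^{k+1}\prod_{i=1}^{k+1}\Hpq^{[(n+1)-i]}$. The added $I$ supplies the $k=0$ term. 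The $q$-derivative works the same way: the extra term $-h\Hqq^{[n]}$ becomes the $k=1$ summand, whose product is empty and whose final factor is $\Hqq^{[(n+1)-1]}$. The shifted sum supplies the summands $k=2,\dots,n+1$.

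I expect no genuine analytic difficulty here, since the argument is pure chain rule and bookkeeping. The main place to be careful is noncommutativity and index conventions. The matrices $\Hpq^{[j]}$ do not commute, so I must check that left multiplication in the recursion produces the product in the order $\Hpq^{[M-1]}\Hpq^{[M-2]}\cdots$ stated in the lemma. I also need to confirm that the Jacobian of $\Hq$ with respect to $p$ is $\Hpq$ rather than its transpose $\Hqp$, under the paper's $(r,s)$ indexing.
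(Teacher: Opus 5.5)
Your proof is correct and follows the paper's route. The paper only states that the formulas follow by induction from \eqref{eq:scheme} and the chain rule. You strengthen the claim to every intermediate iterate $p_n$, which is legitimate because those iterates do not depend on $M$. You then verify the recursions $(p_{n+1})_q=-h\big(\Hqq^{[n]}+\Hpq^{[n]}(p_n)_q\big)$ and $(p_{n+1})_p=I-h\Hpq^{[n]}(p_n)_p$, including the left-multiplication ordering and the fact that the Jacobian of $\Hq$ with respect to $p$ is $\Hpq$ under the paper's indexing. That is precisely the bookkeeping the paper leaves implicit.
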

\begin{proof}
These assertions can be easily shown by mathematical induction based on (\ref{eq:scheme}) and the chain rule. 
\end{proof}

\begin{proof}[Proof of Theorem \ref{thm:antidiagonal_estimate}]
Differentiating \eqref{eq:scheme}, it holds that
$$\qq = I + h\Hqp^{[M]} + h\Hpp^{[M]} \pq, \qquad \qp = h\Hpp^{[M]} \pp.$$
Then, from the definition of $A_{\Phi}$,
$$A_{\Phi} = (I+h\Hqp^{[M]}+h\Hpp^{[M]} \pq)^T \pp - \pq^T \cdot h\Hpp^{[M]} \pp = (I+h \Hpq^{[M]}) \pp,$$
using $\Hqp^T = \Hpq$.

The $\pp$ expression from Lemma \ref{lemma:diff_momenta_wrt_coordin} then yields
\begin{equation} \label{perturbed_diagonal_block_expansion}
    \begin{split}
        A_{\Phi} &= (I+h\Hpq^{[M]}) \sum_{n=0}^M (-h)^n \prod_{i=1}^n \Hpq^{[M-i]} =\\
        &= I + \sum_{n=1}^M (-h)^n \prod_{i=1}^n \Hpq^{[M-i]} + h\Hpq^{[M]} \sum_{n=0}^{M-1} (-h)^n \prod_{i=1}^n \Hpq^{[M-i]} + h\Hpq^{[M]} (-h)^M \prod_{i=1}^M \Hpq^{[M-i]} \\
        &= I + \sum_{n=1}^M (-h)^n \prod_{i=1}^n \Hpq^{[M-i]} -\sum_{n=0}^{M-1} (-h)^{n+1} \prod_{i=0}^{n} \Hpq^{[M-i]} + \Oh(h^{M+1}) =\\
        &= I + \sum_{n=1}^M (-h)^n \left( \prod_{i=1}^n \Hpq^{[M-i]} - \prod_{i=0}^{n-1} \Hpq^{[M-i]} \right) + \Oh(h^{M+1}) = \\
        &= I + \sum_{n=1}^M (-h)^n \Oh(h^{M-n+1}) + \Oh(h^{M+1}) = I + \Oh(h^{M+1}), \quad h \rightarrow 0,
    \end{split}
\end{equation}
where we used Lemma \ref{lemma:diff_hamilton_prods}. 
\end{proof}

\begin{lemma}\label{lemma:Delta_est}
    Consider $M\geq2$ and denote
    \begin{equation*}
        \Delta^{[M]}_n:=\bigg(\prod_{j=0}^{n-2}\Hpq^{[M-j]}\bigg)\Hqq^{[M-n+1]}-\bigg(\prod_{j=1}^{n-1}\Hpq^{[M-j]}\bigg)\Hqq^{[M-n]}
    \end{equation*}
    for $n \in \{2,\ldots, M\}$. Then the following holds
    \begin{equation}\label{Delta_estimate}
        \forall n\in\lbrace 2,\ldots, M\rbrace:\quad\Delta^{[M]}_n = \Oh(h^{M-n+1}),\quad h\rightarrow0.
    \end{equation} 
\end{lemma}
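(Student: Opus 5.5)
The estimate will follow from a telescoping, add-and-subtract argument of the kind used in Lemma \ref{lemma:diff_hamilton_prods}. The two products in $\Delta^{[M]}_n$ have the same length $n-1$ and differ only by a shift of all superscripts by one. We first write the first product as $\Hpq^{[M]}\prod_{j=1}^{n-2}\Hpq^{[M-j]}$ and the second as $\big(\prod_{j=1}^{n-2}\Hpq^{[M-j]}\big)\Hpq^{[M-n+1]}$. A cleaner route is to insert the intermediate term
$$\bigg(\prod_{j=0}^{n-2}\Hpq^{[M-j]}\bigg)\Hqq^{[M-n]},$$
which splits $\Delta^{[M]}_n$ into two pieces:
\begin{equation*}
\Delta^{[M]}_n=\bigg(\prod_{j=0}^{n-2}\Hpq^{[M-j]}\bigg)\big(\Hqq^{[M-n+1]}-\Hqq^{[M-n]}\big)+\bigg(\prod_{j=0}^{n-2}\Hpq^{[M-j]}-\prod_{j=1}^{n-1}\Hpq^{[M-j]}\bigg)\Hqq^{[M-n]}.
\end{equation*}

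For the first piece, we argue exactly as in Lemma \ref{lemma:diff_momenta_hamilton}, now applied to $\Hqq$ instead of $\Hpq$. By Taylor's theorem with $\Hml_{ppq}$ replaced by the appropriate third derivative,
$$\Hqq^{[k]}-\Hqq^{[k-1]}=\Oh(p_k-p_{k-1})=\Oh(h^k).$$
Taking $k=M-n+1$ and using that the prefactor product is bounded, since all Hessians are continuous on the compact set $\mathcal{D}$, the first piece is $\Oh(h^{M-n+1})$.

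For the second piece, the bracket is, up to sign, exactly the difference estimated in Lemma \ref{lemma:diff_hamilton_prods} with $n$ replaced by $n-1$:
$$\prod_{i=1}^{n-1}\Hpq^{[M-i]}-\prod_{i=0}^{n-2}\Hpq^{[M-i]}=\Oh(h^{M-(n-1)+1})=\Oh(h^{M-n+2}).$$
This is valid because $n-1\in\{1,\dots,M-1\}$. Multiplying by the bounded factor $\Hqq^{[M-n]}$ keeps the order, and the result is even better than required. Summing the two pieces gives $\Delta^{[M]}_n=\Oh(h^{M-n+1})$.

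The only point needing care is the boundedness of all factors uniformly in $h$. This requires the iterates $p_n$ to stay in $\mathcal{D}$, which the paper assumes, so the Hessians and third derivatives are bounded there. We also check the index ranges at the extreme $n=M$: the superscript $M-n=0$ refers to $p_0=p$, and the needed difference estimate is $\Hqq^{[1]}-\Hqq^{[0]}=\Oh(h)$, which holds. No genuine obstacle is expected, since the proof is a direct reuse of Lemmas \ref{lemma:diff_momenta_hamilton} and \ref{lemma:diff_hamilton_prods}.
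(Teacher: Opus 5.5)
Your proposal is correct and matches the paper's proof: you insert the same intermediate term $\big(\prod_{j=0}^{n-2}\Hpq^{[M-j]}\big)\Hqq^{[M-n]}$, bound $\Hqq^{[M-n+1]}-\Hqq^{[M-n]}=\Oh(h^{M-n+1})$ by the Taylor argument of Lemma~\ref{lemma:diff_momenta_hamilton}, and apply Lemma~\ref{lemma:diff_hamilton_prods} with $n-1$ in place of $n$ to get $\Oh(h^{M-n+2})$ for the product difference. Your checks of the index range and of the uniform boundedness of the factors are details the paper leaves implicit.
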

\begin{proof}
Direct algebraic manipulation leads to
\begin{equation}\label{reformulation_aux_diff}
\begin{split}
    \Delta^{[M]}_n &= \bigg(\prod_{j=0}^{n-2}\Hpq^{[M-j]}\bigg)\left(\Hqq^{[M-n+1]}-\Hqq^{[M-n]}\right)+\\
    &\qquad+\bigg(\prod_{j=0}^{n-2}\Hpq^{[M-j]}-\prod_{j=1}^{n-1}\Hpq^{[M-j]}\bigg)\Hqq^{[M-n]}.
    \end{split}
\end{equation}
Similarly as in the proof of Lemma \ref{lemma:diff_momenta_hamilton}, we can observe that
\begin{equation*}
    \Hqq^{[M-n+1]}-\Hqq^{[M-n]}=\Oh(h^{M-n+1}),\quad h \rightarrow 0.
\end{equation*}
From Lemma \ref{lemma:diff_hamilton_prods} it directly follows that 
\begin{equation*}
   \prod_{j=0}^{n-2}\Hpq^{[M-j]}-\prod_{j=1}^{n-1}\Hpq^{[M-j]}=\Oh(h^{M-n+2}),\quad h\rightarrow 0.
\end{equation*}

The desired equality (\ref{Delta_estimate}) is then an easy consequence of plugging the above equalities into \eqref{reformulation_aux_diff} and the arithmetic of the big $\Oh$ notation. 

\end{proof}

\begin{proof}[Proof of Theorem \ref{thm:diag_estimate}]
Let us begin with the proof of identity (\ref{zero_diag_block}). This identity simply follows from the definition of the structure $[\cdot,\cdot]$ (see (\ref{strange_commutator_def})) and the symmetry of the matrix $\Hpp^{[M]}$
\begin{equation*}
    [\tilde{q}_p, \tilde{p}_p] = h\left(\Hpp^{[M]}\pp\right)^T\pp-\pp^T\left(h\Hpp^{[M]}\pp\right)=h\left(\pp^T\Hpp^{[M]}\pp-\pp^T\Hpp^{[M]}\pp\right)=O,
\end{equation*}
where we used the identity $\qp=h\Hpp^{[M]}\pp$.

Now, we turn our attention to the proof of the estimate (\ref{nonzero_diag_block}). Just as in the proof of Theorem \ref{thm:antidiagonal_estimate}, differentiating \eqref{eq:scheme} gives
\begin{equation*}
    \qq=I+h\Hqp^{[M]}+h\Hpp^{[M]}\pq.
\end{equation*}
Using this relation, the symmetry of the matrix $\Hpp^{[M]}$ and identity $(\Hpq^{[M]})^T=\Hqp^{[M]}$, one can obtain (recall the $[\cdot]_{\text{skew}}$ notation from \eqref{eq:skew-part})
\begin{equation*}
   [\tilde{q}_q, \tilde{p}_q] =2 \bigg [\left(I+h\Hpq^{[M]}\right)\pq\bigg]_{\mathrm{skew}}.
\end{equation*}
It is convenient to introduce the following notation
\begin{equation} \label{def_gamma}
    \Gamma:=\left(I+h\Hpq^{[M]}\right)\pq.
\end{equation}
Plugging (\ref{diff_momenta_wrt_coord}) into (\ref{def_gamma}) and re-arranging terms, we have 
\begin{equation}\label{equation_for_Gamma}
    \begin{split}
        \Gamma=&-h\Hqq^{[M-1]} +(-1)^Mh^{M+1}\bigg(\prod_{j=0}^{M-1}\Hpq^{[M-j]}\bigg)\Hqq^{[0]}+
        \sum^{M}_{n=2}(-1)^{n+1}h^n\Delta_n^{[M]}.
    \end{split}
\end{equation}
Therefore,
\begin{equation*}
    \begin{split}
        [\tilde{q}_q, \tilde{p}_q]=&2[\Gamma]_{\mathrm{skew}} =2(-1)^Mh^{M+1}\Bigg[\bigg(\prod_{j=0}^{M-1}\Hpq^{[M-j]}\bigg)\Hqq^{[0]}\Bigg]_{\mathrm{skew}}+\\
        &\qquad+2\sum^{M}_{n=2}(-1)^{n+1}h^n\big[\Delta_n^{[M]}\big]_{\mathrm{skew}},
    \end{split}
\end{equation*}
since $\Hqq^{[M-1]}$ is a symmetric matrix.

Using Lemma \ref{lemma:Delta_est}, the second term in this identity can be rewritten as follows:
\begin{equation*}
    \sum^{M}_{n=2}(-1)^{n+1}h^n\big[\Delta_n^{[M]}\big]_{\mathrm{skew}}=\sum^{M}_{n=2}(-1)^{n+1}h^n\Oh(h^{M-n+1})=\Oh(h^{M+1}),\quad h\rightarrow0.
\end{equation*}
The estimate \eqref{nonzero_diag_block} immediately follows. 
\end{proof}

\subsection{Appendix B. Proof of Theorem \ref{thm:canonical_transformation} and Corollary \ref{cor:form_adjoint}.}
\begin{proof}[Proof of Theorem \ref{thm:canonical_transformation}]
Denote the components of the inverse function $\zeta^{-1}$ as 
$$\zeta^{-1}(Q,P) = (q(P),p(Q)) = (P, -Q),$$
i.e. by functions $q$, $p$ mapping new coordinates $(Q,P)$ to corresponding old coordinates $(q,p)$. By the chain rule, it holds that
\begin{align*}
\K_Q(Q,P) &= (\Hml \circ \zeta^{-1})_Q(Q,P) = \Hml_p\big(q(P),p(Q)\big) \cdot \partial_Q p(Q) = - (\Hml_p \circ \zeta ^{-1})(Q,P), \\
\K_P(Q,P) &= (\Hml \circ \zeta^{-1})_P (Q,P) = \Hml_q\big(q(P),p(Q)\big) \cdot \partial_P q(P) = (\Hml_q \circ \zeta^{-1})(Q,P).
\end{align*}

Now, suppose that we have a state $(q,p)$ with corresponding new coordinates $(Q,P) = (-p, q)$ and we apply the $q$-implicit scheme \eqref{eq:adjoint_scheme} with Hamiltonian $\Hml$ to the old-coordinate state $(q,p)$, and the $P$-implicit scheme \eqref{eq:scheme} with Hamiltonian $\K$ to the new-coordinate state $(Q,P)$. This way, we obtain values $q_0,\dots,q_M, \tilde{p}$ and $P_0,\dots,P_M, \tilde{Q}$, respectively. 

We now show that $P_i = P(q_i)$ and $\tilde Q = Q(\tilde p)$, i.e. it does not matter if we first apply the scheme \eqref{eq:adjoint_scheme} and then transform to the new coordinates, or if we first transform and then apply the scheme \eqref{eq:scheme} in the new coordinates:

We defined $P_0 = P = q = q_0 = P(q_0)$. If for $i \in \{0, \dots, M-1\}$ it holds that $P_i = P(q_i)$, then
\begin{align*}
P_{i+1} &= P - h \K_Q(Q,P_i) = q - h(-\Hp \circ \zeta^{-1})(Q,P_i) =\\
&= q + h (\Hp \circ \zeta^{-1})(Q(p), P(q_i)) = q + h (\Hml_p \circ \zeta^{-1})(\zeta(q_i,p)) =\\
&= q + h\Hp(q_i,p) = q_{i+1} = P(q_{i+1}).
\end{align*}
Also,
\begin{align*}
\tilde Q &= Q + h\K_P (Q, \tilde P) = -p + h(\Hq \circ \zeta^{-1})(Q(p), P(\tilde q)) =\\
&=-p + h \Hq(\tilde q, p) = -\tilde p = Q(\tilde p).
\end{align*}
Thus the application of schemes $\eqref{eq:scheme}$ and $\eqref{eq:adjoint_scheme}$ commute with $\zeta$. 
\end{proof}

\begin{proof}[Proof of Corollary \ref{cor:form_adjoint}]
Just as in Theorem \ref{thm:canonical_transformation}, denote $(q,p)$ the old coordinates and $(Q,P)$ the new coordinates related by the mapping $\zeta$. Also again denote $\K = \Hml \circ \zeta^{-1}$. Let $\Psi_h^{[M]}: (q,p) \mapsto (\tilde q, \tilde p)$ be the adjoint, $q$-implicit scheme \eqref{eq:adjoint_scheme}, and let $\Phi_h^{[M]}:(Q,P) \mapsto (\tilde Q, \tilde P)$ be the $P$-implicit scheme \eqref{eq:scheme} in the new coordinates.

By \eqref{eq:perturbed_matrix}, we have
$$\big(D \Phi_h^{[M]}\big)^T J \big(D\Phi_h^{[M]}\big) = \begin{pmatrix} [\tilde{Q}_Q, \tilde{P}_Q] & A_{\K} \\ -A_{\K}^T & [\tilde{Q}_P, \tilde{P}_P] \end{pmatrix}$$
with $A_{\K} := \tilde Q_Q^T \tilde P_P - \tilde P_Q^T \tilde Q_P$. For the flow $\Psi_h^{[M]}$, we also have
$$(D\Psi_h^{[M]})^T J (D\Psi_h^{[M]}) = \begin{pmatrix} [\tilde{q}_q, \tilde{p}_q] & A \\ -A^T & [\tilde{q}_p, \tilde{p}_p] \end{pmatrix}$$
for $A = \tilde q_q^T \tilde p_p - \tilde p_q^T \tilde q_p$, again due to \eqref{eq:perturbed_matrix}.

Thanks to Theorem \ref{thm:canonical_transformation}, the derivatives of the new-coordinate flow are related to the derivatives of the old-coordinate flow via
$$D \Phi_h^{[M]} = D\zeta \cdot D\Psi_h^{[M]} \cdot D\zeta^{-1} = J \cdot D\Psi_h^{[M]} \cdot J^{-1},$$
which yields
\begin{align*}
\tilde Q_Q &= \tilde p_p, &\tilde P_Q &= - \tilde q_p,\\
\tilde Q_P &= - \tilde p_q, &\tilde P_P &= \tilde q_q.
\end{align*}

We can thus relate the blocks in $(D\Psi_h^{[M]})^T J (D\Psi_h^{[M]})$ to blocks in $\big(D \Phi_h^{[M]}\big)^T J \big(D\Phi_h^{[M]}\big)$. It holds that
$$[\tilde Q_Q, \tilde P_Q] = [\tilde q_p, \tilde p_p], \quad [\tilde Q_P, \tilde P_P] = [\tilde q_q, \tilde p_q], \quad A_{\K} = A^T.$$

Moreover, as $\Phi_h^{[M]}$ is formally given by \eqref{eq:scheme}, we can apply the previous Theorems \ref{thm:antidiagonal_estimate} and $\ref{thm:diag_estimate}$ or the Corollary \ref{cor:form_SE_matrix} to obtain
$$[\tilde q_p, \tilde p_p] = \Oh(h^{M+1}), \quad [\tilde q_q, \tilde p_q] = O, \quad A = I + \Oh(h^{M+1}), \quad h \rightarrow 0.$$

\end{proof}

\subsection{Appendix C. Proof of Corollary \ref{cor:SV_est}.}
\begin{proof}[Proof of Corollary \ref{cor:SV_est}]
From Corollary \ref{cor:form_SE_matrix} and Corollary \ref{cor:form_adjoint}, it follows that
\begin{equation}\label{pseudosympl_Phi_half}
    (D \Phi_{\frac{h}{2}}^{[M_1]})^T J (D\Phi_{\frac{h}{2}}^{[M_1]})  = J + \begin{pmatrix}D & E \\ -E^T & O\end{pmatrix},
\end{equation}
and
\begin{equation}\label{pseudosympl_Psi_half}
    \big[\big(D\Psi_{\frac{h}{2}}^{[M_2]}\big)\circ\Phi_{\frac{h}{2}}^{[M_1]}\big]^T J \big[\big(D\Psi_{\frac{h}{2}}^{[M_2]}\big)\circ\Phi_{\frac{h}{2}}^{[M_1]}\big]= J + \begin{pmatrix}O & \hat E \\ - \hat{E}^T & \hat{D}\end{pmatrix},
\end{equation}
where both matrices $D$ and $E$ are of order $\Oh(h^{M_1+1})$, $h \rightarrow 0$, and both matrices $\hat{D}$ and $\hat{E}$ are of order $\Oh(h^{M_2+1})$, $h \rightarrow 0$. 

Combining identities (\ref{pseudosympl_Phi_half}) and (\ref{pseudosympl_Psi_half}) yields
\begin{equation*}
\begin{split}
    \big(D\Upsilon_h^{[M_1,M_2]}\big)^TJ \big(D\Upsilon_h^{[M_1,M_2]}\big)&=J + \begin{pmatrix}D & E \\ -E^T & O\end{pmatrix}\\
     &\quad+(D \Phi_{\frac{h}{2}}^{[M_1]})^T \begin{pmatrix}O & \hat E \\ - \hat{E}^T & \hat{D}\end{pmatrix} (D\Phi_{\frac{h}{2}}^{[M_1]}).
\end{split}
\end{equation*}
The form of the perturbation in (\ref{symp_perturbation_SV1}) then follows straightforwardly from this identity.

The case of the numerical flow $\Lambda_h^{[M_1,M_2]}$ can be treated in an analogous manner. 
\end{proof}

\subsection{Appendix D. Proof of Theorem \ref{lemma:diagonal_block_optimality}}
\begin{lemma}\label{lemma:H^M_is_not_symmetric}
    Let $M\in\mathbb{N}$ and $\Xi$ be given by (\ref{Hessian_matrix_optim}). Then there exists a mapping $\xi^{(M)}:\mathbb{Z}\rightarrow\mathbb{Z}$ such that 
    \begin{equation*}
    (\Xi^M)_{ij} = \xi^{(M)}(i-j),\quad i,j = 1,\ldots,N.
    \end{equation*}
    i.e., $\Xi^M$ is a Toeplitz matrix. Moreover, this mapping satisfies the following relation
    \begin{equation}\label{auxiliary_relation_toeplic}
    -2\xi^{(M)}(l)=\xi^{(M)}(l-N),\quad l=1,\ldots,N-1
\end{equation}
and the matrix $\Xi^M$ is not symmetric.
\end{lemma}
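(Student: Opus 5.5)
The plan is to recognise $\Xi$ as a polynomial in an ``$\omega$-circulant'' shift matrix, so that all powers of $\Xi$ inherit the Toeplitz structure together with the twisted wrap-around relation. Let $Z$ be the lower shift matrix ($Z e_j=e_{j+1}$, $Z e_N=0$) and put $C:=Z-2e_1e_N^T$. By direct computation (or by induction on $l$), $C^l$ for $0\le l\le N-1$ has entries $1$ on the diagonal $i-j=l$, entries $-2$ on the diagonal $i-j=l-N$, and zeros elsewhere. Moreover $C^N=-2I$, since $C$ is the companion matrix of $x^N+2$. Comparing entries with \eqref{Hessian_matrix_optim} then gives
\begin{equation*}
\Xi=\sum_{l=1}^{N-1}C^l=p(C),\qquad p(x):=x+x^2+\dots+x^{N-1}.
\end{equation*}

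Next, $\Xi^M=p(C)^M$ is a polynomial in $C$. Reducing it modulo $x^N+2$ (using $C^N=-2I$) gives integer coefficients $c_0,\dots,c_{N-1}$ with $\Xi^M=\sum_{l=0}^{N-1}c_lC^l$. Because the diagonals $i-j=l$ and $i-j=l-N$ are disjoint for different $l\in\{0,\dots,N-1\}$, this sum is Toeplitz. Define $\xi^{(M)}(l):=c_l$ and $\xi^{(M)}(l-N):=-2c_l$ for $l=0,\dots,N-1$, and extend $\xi^{(M)}$ arbitrarily (say by zero) to the rest of $\mathbb{Z}$. The Toeplitz property and relation \eqref{auxiliary_relation_toeplic} then follow at once. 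The $l=0$ term contributes only to the main diagonal, because $i-j=-N$ never occurs.

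For non-symmetry, suppose $\Xi^M$ were symmetric, i.e.\ $\xi(l)=\xi(-l)$ for $1\le l\le N-1$. Applying \eqref{auxiliary_relation_toeplic} twice gives
\begin{equation*}
\xi(l)=\xi(-l)=-2\,\xi(N-l)=-2\,\xi(-(N-l))=-2\,\xi(l-N)=4\,\xi(l),
\end{equation*}
so $\xi(l)=0$ for all $l\neq 0$ and hence $\Xi^M=cI$. To exclude this, diagonalise $C$. Its eigenvalues $\lambda_k=r e^{i\pi(2k+1)/N}$, $r=2^{1/N}$, are the $N$ distinct roots of $\lambda^N=-2$. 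On them $p(\lambda)=(\lambda^N-\lambda)/(\lambda-1)=-(\lambda+2)/(\lambda-1)$.

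If $\Xi^M=cI$, then $|p(\lambda_k)|^{2}$ would be the same for every $k$. On the circle $|\lambda|=r$ we have
\begin{equation*}
|p(\lambda)|^2=\frac{r^2+4+4\operatorname{Re}\lambda}{r^2+1-2\operatorname{Re}\lambda},
\end{equation*}
which is strictly increasing in $\operatorname{Re}\lambda$. The case $c=0$ is also impossible, because $p(\lambda_k)\neq0$ (as $\lambda_k\neq-2$ when $N\ge2$). So it suffices that the $\lambda_k$ do not all share one real part, and this holds whenever $N\ge3$.

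I expect this last step to be the main obstacle, and it exposes a genuine caveat. For $N=2$ the eigenvalues $\pm i\sqrt2$ share real part $0$, and indeed $\Xi=\begin{pmatrix}0&-2\\1&0\end{pmatrix}$ satisfies $\Xi^2=-2I$, which is symmetric. Non-symmetry therefore holds for all $M$ when $N\ge3$. When $N=2$ it holds only for odd $M$: then $\Xi^M=(-2)^{(M-1)/2}\,\Xi$, whose skew-symmetric part is nonzero. I would state this restriction explicitly when proving the lemma.
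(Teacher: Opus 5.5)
Your proof is correct, and it takes a genuinely different route from the paper.

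\textbf{The paper's route.} The paper argues by induction on $M$ with explicit index manipulation. It shows that $(\Xi^{M+1})_{i+1,j+1}$ and $(\Xi^{M+1})_{ij}$ differ only by the boundary terms $-\xi^{(M)}(i-N)-2\xi^{(M)}(i)$, which cancel by the induction hypothesis. It then writes the first-column and last-column entries of $\Xi\,\Xi^M$ as sums, from which \eqref{auxiliary_relation_toeplic} at level $M+1$ is said to follow.

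\textbf{Your route.} Your identification $\Xi=p(C)$, with $C$ the companion matrix of $x^N+2$, replaces this bookkeeping with one structural fact. Every polynomial in $C$ reduces to a combination of $I,C,\dots,C^{N-1}$, and each $C^l$ is supported on the diagonals $i-j=l$ and $i-j=l-N$, which are disjoint for distinct $l$. Hence the Toeplitz form, the twisted relation and the integrality of $\xi^{(M)}$ hold for all $M$ at once. The spectrum of $\Xi^M$ also becomes explicit.

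\textbf{The non-symmetry clause.} The paper's proof stops after the relation and never addresses non-symmetry, even though Theorem~\ref{lemma:diagonal_block_optimality} relies on it for sharpness. Your argument supplies it: symmetry plus the relation forces $\xi^{(M)}(l)=4\xi^{(M)}(l)$, so $\Xi^M$ would be scalar, and the strict monotonicity of $|p(\lambda)|$ in $\mathrm{Re}\,\lambda$ rules that out. Your example $\Xi^2=-2I$ for $N=2$ is a genuine counterexample. As stated, the lemma fails for $N=2$ with $M$ even, and trivially for $N=1$. So the hypothesis $N\ge 3$, or $M$ odd when $N=2$, has to be added.

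In that exceptional case the block $[\tilde{q}_q,\tilde{p}_q]$ vanishes identically for this Hamiltonian. The antidiagonal sharpness survives, since $\Xi$ is invertible for $N\ge 2$.

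\textbf{One step to fill in.} You do not justify that the roots do not all share a real part when $N\ge 3$. This is quick: for $N\ge 2$ the roots of $x^N+2$ sum to zero. If their real parts were all equal, they would all be purely imaginary, i.e.\ lie in $\{\pm i\,2^{1/N}\}$, which is impossible for $N\ge 3$ distinct roots.
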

\begin{proof}
    We proceed by induction over $M$. For $M=1$ we have
    \begin{equation*}
    \xi^{(1)}(l)=\begin{cases}
        -2 & \text{if }\  l<0,\\
         0 & \text{if }\  l=0,\\
         1 & \text{if }\  l>0
    \end{cases}
\end{equation*}
and (\ref{auxiliary_relation_toeplic}) obviously holds. 

Suppose that $\Xi^M$ is a Toeplitz matrix for an arbitrary $M\in\mathbb{N}$ and that the relation (\ref{auxiliary_relation_toeplic}) holds. Then, from this hypothesis and the definition of matrix multiplication, we have
\begin{equation*}
\begin{split}
    (\Xi^{M+1})_{i+1,j+1} &= \sum_{l=1}^N\xi^{(M)}(i+1-l)\xi^{(1)}(l-j-1)\\
    &=\sum_{l=-j}^{N-j-1}\xi^{(M)}(i-j-l)\xi^{(1)}(l)\\
    &=(\Xi^{M+1})_{ij}-\xi^{(M)}(i-N)-2\xi^{(M)}(i)\\
    &=(\Xi^{M+1})_{ij}
\end{split}
\end{equation*}
for all $i,j\in\lbrace1,\ldots,N-1\rbrace$. Thus, there exists a mapping $\xi^{(M+1)}:\mathbb{Z}\rightarrow\mathbb{Z}$ such that 
    \begin{equation*}
    (\Xi^{M+1})_{ij} = \xi^{(M+1)}(i-j),\quad i,j = 1,\ldots,N.
\end{equation*} 
Moreover, from the structure of $\Xi$, it follows that
\begin{equation*}
    \xi^{(M+1)}(l)=\left(\Xi\Xi^M\right)_{l+1,1}=\sum_{k=1}^{l}\xi^{(M)}(k-1)-2\sum_{k=l+2}^{N}\xi^{(M)}(k-1),\quad l=0,\ldots N-1
\end{equation*}
and 
\begin{equation*}
    \xi^{(M+1)}(l-N)=\left(\Xi\Xi^M\right)_{l,N}=\sum_{k=1}^{l-1}\xi^{(M)}(k-N)-2\sum_{k=l+1}^{N}\xi^{(M)}(k-N),\quad l=1,\ldots N,
\end{equation*}
where we used the convention that the sum over the empty set is equal to zero. By employing the induction hypothesis (\ref{auxiliary_relation_toeplic}), the following relation
 \begin{equation}
    -2\xi^{(M+1)}(l)=\xi^{(M+1)}(l-N),\quad l=1,\ldots,N-1
\end{equation}
can be easily derived. 
\end{proof}
\begin{proof}[Proof of Theorem \ref{lemma:diagonal_block_optimality}]
We show the result for the $p$-implicit Symplectic Euler method (\ref{eq:scheme}). By virtue of Theorem \ref{thm:canonical_transformation}, a similar assertion holds also for the adjoint $q$-implicit scheme (\ref{eq:adjoint_scheme}).

From (\ref{equation_for_Gamma}), we immediately obtain
\begin{equation*}
     \begin{split}
        \Gamma=&-hI +(-1)^Mh^{M+1} \Xi^M,
    \end{split}
\end{equation*}
since 
\begin{equation*}
    \begin{split}
        \Delta^{[M]}_n&=\bigg(\prod_{j=0}^{n-2}\Hpq^{[M-j]}\bigg)\Hqq^{[M-n+1]}-\bigg(\prod_{j=1}^{n-1}\Hpq^{[M-j]}\bigg)\Hqq^{[M-n]}=\\
        &=\Xi^{n-1}-\Xi^{n-1}=O,\quad n=2,\ldots,M.
    \end{split}
\end{equation*}
From Lemma \ref{lemma:H^M_is_not_symmetric}, it follows that 
\begin{equation*}
     [\tilde{q}_q, \tilde{p}_q] =2(-1)^Mh^{M+1}\big[\Xi^M\big]_{\mathrm{skew}}.
\end{equation*}

The antidiagonal blocks of the perturbed symplectic structure can be treated analogously. From (\ref{perturbed_diagonal_block_expansion}), it follows that 
\begin{equation*}
    \begin{split}
        A_{\Phi}&= I + \sum_{n=1}^M (-h)^n \left( \prod_{i=1}^n \Hpq^{[M-i]} - \prod_{i=0}^{n-1} \Hpq^{[M-i]} \right) + (-1)^Mh^{M+1}\Hpq^{[M]} \prod_{i=1}^M \Hpq^{[M-i]}\\
        &=I +(-1)^{M}h^{M+1}\Xi^{M+1}.
    \end{split}
\end{equation*}
\end{proof}
\subsection{Appendix E. Proof of Theorem \ref{volume_pr_p_psesy}}
\begin{lemma}\label{aux_alg_lemma_block_det}
    Let $A,B,C,$ and $D$ be  $d\times d$ complex matrices, then \begin{equation*} \det \begin{pmatrix}
            A & B\\
            C & D
        \end{pmatrix} = \det (AD-BC)
    \end{equation*}
    whenever at least one of the blocks $A,B,C$ and $D$ is equal to $O$.
\end{lemma}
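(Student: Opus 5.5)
The plan is a four-way case distinction on which block vanishes. In each case I reduce the $2d\times 2d$ determinant to a product of two $d\times d$ determinants using the standard fact
\[
\det\begin{pmatrix} X & Y\\ O & Z\end{pmatrix}=\det\begin{pmatrix} X & O\\ W & Z\end{pmatrix}=\det X\,\det Z .
\]
This fact follows, for instance, from the Laplace expansion, or from factoring the matrix as a product of a block-diagonal matrix and a unipotent block-triangular one. I then compare the result with $\det(AD-BC)$ computed via multiplicativity of the determinant.

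If $C=O$, the matrix is block upper triangular, so its determinant is $\det A\det D$. On the other side, $AD-BC=AD$, so $\det(AD-BC)=\det A\det D$ and the two agree. The case $B=O$ is identical: the matrix is block lower triangular, and again $AD-BC=AD$.

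If $A=O$ or $D=O$, I first permute the block columns, exchanging the first $d$ columns with the last $d$. This permutation is a product of $d^2$ column transpositions, since each of the $d$ columns is moved past $d$ others, so it changes the determinant by $(-1)^{d^2}=(-1)^d$.

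For $A=O$, the permuted matrix is $\begin{pmatrix} B & O\\ D & C\end{pmatrix}$, which is block lower triangular. Hence the original determinant equals $(-1)^d\det B\det C$. For $D=O$, the permuted matrix is $\begin{pmatrix} B & A\\ O & C\end{pmatrix}$, which is block upper triangular, and the same value $(-1)^d\det B\det C$ results. In both cases $AD-BC=-BC$, and
\[
\det(-BC)=(-1)^d\det B\det C,
\]
which matches.

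No step is a real obstacle. The only point needing care is the sign bookkeeping in the last two cases: the factor $(-1)^d$ from the column permutation must match the factor $(-1)^d$ from $\det(-I_d)$. Nothing uses real entries, so the argument holds verbatim over $\mathbb{C}$. I also note that the block triangular determinant formula requires no commutativity assumption among the blocks. This is exactly why the lemma needs a zero block: without one, $\det(AD-BC)$ is in general not the determinant of the block matrix.
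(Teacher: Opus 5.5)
Your case analysis is correct, including the sign bookkeeping. In the cases $A=O$ and $D=O$, the block-column swap costs $(-1)^{d^2}=(-1)^d$, which is exactly the factor in $\det(-BC)=(-1)^d\det B\det C$.

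The paper gives no argument of its own here; it only refers to an external source on block determinants. Results of that kind are typically stated under a commutativity hypothesis, e.g.\ $\det\begin{pmatrix}A&B\\C&D\end{pmatrix}=\det(AD-BC)$ whenever $CD=DC$, with companion identities for the other commuting pairs. Since a zero block commutes with everything, the lemma would then be a special case. Proving the commuting version in full generality, however, needs a Schur-complement factorization plus a continuity or polynomial-identity argument when the relevant block is singular.

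Your route bypasses all of this. It uses only the block-triangular determinant formula, multiplicativity of $\det$, and the sign of a permutation, so it is self-contained and elementary. The cost is that it does not give the more general commuting-block statement. The paper never needs that generality: it applies the lemma only to $J$ and to the perturbed matrices $\tilde J$ of the two Symplectic Euler variants, each of which has a zero diagonal block.
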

\begin{proof}
For the proof, see \cite{block_det}. 
\end{proof}

\begin{proof}[Proof of Theorem \ref{volume_pr_p_psesy}]
The standard properties of the determinant and Lemma \ref{aux_alg_lemma_block_det} yield
\begin{equation*}
   \det\tilde{J}_{\Phi}= \det\left((D\Phi^{[M]}_h)^TJD\Phi^{[M]}_h\right) = \left(\det D\Phi^{[M]}_h\right)^2\det J=\left(\det D\Phi^{[M]}_h\right)^2,
\end{equation*}
where we used $\det J = 1$.

From Corollary \ref{cor:form_SE_matrix}, it follows that
\begin{equation*}
     \det \tilde{J}_{\Phi}=\det \begin{pmatrix} [\tilde{q}_q, \tilde{p}_q] & A_{\Phi} \\ -A_{\Phi}^T & O \end{pmatrix}=\left(\det A_{\Phi}\right)^2,
\end{equation*}
where we employed Lemma \ref{aux_alg_lemma_block_det}. Now, by comparing the left hand sides of the two expressions above, we obtain the following identity:
\begin{equation*}
    \left|\det D\Phi^{[M]}_h\right|=|\det A_{\Phi}|.
\end{equation*}
Let $G\subset \mathcal{D}$ be a measurable set. Then, according to the substitution theorem, we have
\begin{equation}\label{volume}
    \begin{split}
        \lambda^{2N}(\Phi_h^{[M]}(G)) = \int_{\Phi_h^{[M]}(G)}\mathrm{d}\tilde{z} = \int_{G}\left|\det D\Phi^{[M]}_h\right|\mathrm{d}z = \int_G|\det A_{\Phi}|\mathrm{d}z.
    \end{split}
\end{equation}

The determinant of a perturbed identity matrix can be expanded using the following well-known formula
\begin{equation*}
    \det (I+E) = 1 + \mathrm{Tr}\;E + \omega(E), 
\end{equation*}
where higher-order terms are hidden in $\omega(E)$.  From the proof of Theorem \ref{thm:antidiagonal_estimate}, it follows that $A_{\Phi} = I + E$ with
\begin{equation*}
    E=\sum_{n=1}^M (-h)^n \left( \prod_{i=1}^n \Hpq^{[M-i]} - \prod_{i=0}^{n-1} \Hpq^{[M-i]} \right) + h\Hpq^{M} (-h)^M \prod_{i=1}^M \Hpq^{[M-i]}.
\end{equation*}
Using standard properties of the matrix trace and Taylor expansion, it can easily be shown that $\mathrm{Tr}\;E=\Oh(h^{M+1})$, $h\rightarrow0$. Altogether, we obtain
\begin{equation}\label{determinant_DPhi}
    \left|\det D\Phi^{[M]}_h\right| = 1 + \Oh(h^{M+1}),\quad h\rightarrow 0.
\end{equation}
Thus, identity (\ref{volume_perturb_SE_1}) follows from (\ref{volume}). 

In order to obtain identity (\ref{volume_perturb_SE_2}), we proceed similarly. For the numerical flow $\Psi^{[M]}_h$, we also have 
\begin{equation}\label{determinant_DPsi}
\left|\det D\Psi^{[M]}_h\right|=1 + \Oh(h^{M+1}),\quad h\rightarrow 0.
\end{equation}
Integrating then yields (\ref{volume_perturb_SE_2}). 

Furthermore, Theorem \ref{thm:canonical_transformation} implies that
\begin{equation*}
\Psi^{[M]}_h=\zeta^{-1}\circ\Phi^{[M]}_h\circ\zeta,    
\end{equation*}
where we recall that $\Phi^{[M]}_h$ is associated with the Hamiltonian $\K$. Using the chain rule, identities (\ref{SV_flow1}) and (\ref{SV_flow2}), equations (\ref{determinant_DPhi}) and (\ref{determinant_DPsi}), and the fact that $\left|\det D\zeta\right|=1$ in $\mathcal{D}$, it follows that both determinants $\left|\det D\Upsilon_h^{[M_1,M_2]}\right|$ and $\left|\det D\Lambda_h^{[M_1,M_2]}\right|$ are equal to $1+\Oh(h^{\min{\lbrace M_1,M_2\rbrace}+1})$ as $h\to 0$. Finally, we apply the substitution theorem to complete the proof of identities (\ref{volume_perturb_SV1}) and~(\ref{volume_perturb_SV2}). 
\end{proof}
\end{document}